\DeclareMathAlphabet{\mathpzc}{OT1}{pzc}{m}{it}
\newcommand\newcheck[1]{%
\savestack{\tmpbox}{\stretchto{%
  \scaleto{%
    \scalerel*[\widthof{\ensuremath{#1}}]{\kern-.6pt\bigwedge\kern-.6pt}%
    {\rule[-\textheight/2]{1ex}{\textheight}}
  }{\textheight}%
}{0.5ex}}%
\stackon[1pt]{#1}{\scalebox{-1}{\tmpbox}}%
}
\newcommand\newhat[1]{%
\savestack{\tmpbox}{\stretchto{%
  \scaleto{%
    \scalerel*[\widthof{\ensuremath{#1}}]{\kern-.6pt\bigwedge\kern-.6pt}%
    {\rule[-\textheight/2]{1ex}{\textheight}}
  }{\textheight}%
}{0.5ex}}%
\stackon[1pt]{#1}{\scalebox{1}{\tmpbox}}%
}
\def\cA{\mathscr{A}}
\def\cD{\mathscr{D}}
\def\cE{\mathscr{E}}
\def\cI{\mathscr{I}}
\def\cM{\mathscr{M}}
\def\BH{\mathbb{H}}
\def\BZ{\mathbb{Z}}
\def\Bk{\mathbbm{k}}
\def\fr{\mathfrak{r}}
\mathchardef\mhyphen="2D
\def\adots{\mathinner{\mkern1mu\raise1.0pt\vbox{\kern7.0pt\hbox{.}}\mkern2mu\raise5.0pt\hbox{.}\mkern2mu\raise9.0pt\hbox{.}\mkern1mu}}
\def\B{\operatorname{B}}
\def\Ch{\operatorname{Ch}}
\def\Coker{\operatorname{Coker}}
\def\dddots{\mathinner{\mkern1mu\raise10.0pt\vbox{\kern7.0pt\hbox{.}}\mkern2mu\raise5.3pt\hbox{.}\mkern2mu\raise1.0pt\hbox{.}\mkern1mu}}
\def\dddotssmall{\mathinner{\mkern1mu\raise7.0pt\vbox{\kern7.0pt\hbox{.}}\mkern-1mu\raise4pt\hbox{.}\mkern-1mu\raise1.0pt\hbox{.}\mkern1mu}}
\def\Diff{\operatorname{Diff}}
\def\Ext{\operatorname{Ext}}
\def\GInj{\operatorname{GInj}}
\def\H{\operatorname{H}}
\def\Hom{\operatorname{Hom}}
\def\id{\operatorname{id}}
\def\Image{\operatorname{Im}}
\def\Inj{\operatorname{Inj}}
\def\K0{\operatorname{K}_0}
\def\Ker{\operatorname{Ker}}
\def\Mod{\operatorname{Mod}}
\def\PSL2{\operatorname{PSL}_2}
\def\SL2{\operatorname{SL}_2}
\newcommand\Tensor[1]{{\underset{#1}{\otimes}}}
\def\weq{\operatorname{weq}}
\def\Z{\operatorname{Z}}
\numberwithin{equation}{section}
\renewcommand{\theequation}{\arabic{section}.\arabic{equation}}
\renewcommand{\thesubsection}{(\arabic{section}.\roman{subsection})}
\newtheorem{Lemma}{Lemma}[section]
\newtheorem{Theorem}[Lemma]{Theorem}
\newtheorem{Proposition}[Lemma]{Proposition}
\theoremstyle{definition}
\newtheorem{Remark}[Lemma]{Remark}
\theoremstyle{theorem}
\newtheorem{ThmIntro}{Theorem}
\theoremstyle{definition}
\newtheorem{DefIntro}[ThmIntro]{Definition}
\newtheorem*{bfhpg*}{}
\declaretheoremstyle[
notefont=\bfseries, notebraces={}{},
bodyfont=\normalfont,
headformat=\NUMBER~\NOTE,
headpunct={}
]{foobar}
\declaretheorem[style=foobar,numberlike=Lemma]{altbfhpg}
\begin{document}

\setlength{\parindent}{0pt}
\setlength{\parskip}{7pt}

\title[$Q$-shaped minimal semiinjective resolutions]{Minimal semiinjective resolutions in the $Q$-shaped derived category}

\author{Henrik Holm}

\address{Department of Mathematical Sciences, Universitetsparken 5, University of Copenhagen, 2100 Copenhagen {\O}, Denmark} 
\email{holm@math.ku.dk}

\urladdr{http://www.math.ku.dk/\~{}holm/}

\author{Peter J\o rgensen}

\address{Department of Mathematics, Aarhus University, Ny Munkegade 118, 8000 Aarhus C, Denmark}
\email{peter.jorgensen@math.au.dk}

\urladdr{https://sites.google.com/view/peterjorgensen}


\keywords{Abelian model category, chain complex, differential module, triangulated category}

\subjclass[2020]{16E35, 18E35, 18G80, 18N40}

\begin{abstract} 

Injective resolutions of modules are key objects of homological algebra, which are used for the computation of derived functors.  Semiinjective resolutions of chain complexes are more general objects, which are used for the computation of $\Hom$ spaces in the derived category $\cD( A )$ of a ring $A$.  Minimal semiinjective resolutions have the additional property of being unique.

\medskip
\noindent
The $Q$-shaped derived category $\cD_Q( A )$ consists of $Q$-shaped diagrams for a suitable preadditive category $Q$, and it generalises $\cD( A )$.  Some special cases of $\cD_Q( A )$ are the derived categories of differential modules, $m$-periodic chain complexes, and $N$-complexes, and there are many other possibilities. The category $\cD_Q( A )$ shares some key properties of $\cD( A )$; for instance, it is triangulated and compactly generated.

\medskip
\noindent
This paper establishes a theory of minimal semiinjective resolutions in $\cD_Q( A )$.  As a sample application, it generalises a theorem by Ringel--Zhang on differential modules.

\end{abstract}

\maketitle

\setcounter{section}{-1}
\section{Introduction}
\label{sec:introduction}

This paper generalises the theory of minimal semiinjective resolutions in $\cD( A )$, the classic derived category of a ring $A$, to $\cD_Q( A )$, the $Q$-shaped derived category.

The $Q$-shaped derived category was defined in \cite{HJ-JLMS} and \cite{HJ-TAMS}; see \cite{HJ-Abel} for a quick introduction.  The objects of $\cD_Q( A )$ are $Q$-shaped diagrams of $A$-modules where $Q$ is a suitable preadditive category.  For example, $Q$ could be given by Figure \ref{fig:linear_quiver} or Figure \ref{fig:cyclic_quiver} with the relations that $N$ consecutive arrows compose to zero for some fixed $N \geqslant 2$, and then $\cD_Q( A )$ would be the derived category of $N$-complexes or $m$-periodic $N$-complexes.  Setting $N = 2$ shows that $\cD_Q( A )$ can be specialised to $\cD( A )$, but there is a range of other choices of $Q$ enabling the construction of bespoke categories $\cD_Q( A )$, which are compactly generated triangulated categories like $\cD( A )$.

\begin{figure}
\begin{tikzpicture}[scale=1.5]
  \node at (-3,0){$\cdots$};
  \draw[->] (-2.75,0) to (-2.2,0);
  \node at (-2,0){$2$};
  \draw[->] (-1.8,0) to (-1.2,0);  
  \node at (-1,0){$1$};
  \draw[->] (-0.8,0) to (-0.2,0);    
  \node at (0,0){$0$};
  \draw[->] (0.2,0) to (0.75,0);    
  \node at (1,0){$-1$};
  \draw[->] (1.25,0) to (1.75,0);    
  \node at (2,0){$-2$};    
  \draw[->] (2.25,0) to (2.75,0);    
  \node at (3,0){$\cdots$};    
\end{tikzpicture}
\caption{A chain complex is a diagram of this form.}
\label{fig:linear_quiver}
\end{figure}

The theory of minimal semiinjective resolutions in $\cD( A )$ was developed by Avramov--Foxby--Halperin \cite{AFH}, Christensen--Foxby--Holm \cite[app.\ B]{CFH-book}, Foxby \cite[sec.\ 10]{HBF-preprints-19}, Garc\'{\i}a Rozas \cite[sec.\ 2.3 and 2.4]{Garcia-Rozas-book}, Krause \cite[app.\ B]{Krause}, and, in an abstract version, Roig \cite{Roig}.  Minimal injective resolutions of modules are a special case, and minimal semiinjective resolutions in $\cD( A )$ have a range of applications.  Chen--Iyengar and Foxby used them to investigate the small support \cite[prop.\ 2.1]{Chen-Iyengar}, \cite[rmk.\ 2.9]{HBF-Bounded-complexes}, Christensen--Iyengar--Marley used them to prove results on $\Ext$ rigidity \cite[prop.\ 3.2, prop.\ 3.4, thm.\ 5.1]{Christensen-Iyengar-Marley}, Enochs--Jenda--Xu linked them to relative homological algebra \cite[thm.\ 3.18]{Enochs-Jenda-Xu}, and Iacob--Iyengar used them to characterise regular rings \cite[prop.\ 2.10]{Iacob-Iyengar}.

Motivated by this we will develop a theory of minimal semiinjective resolutions in $\cD_Q( A )$.  We will provide different characterisations of minimal semiinjective objects in Theorem \ref{thm:15_23_25} and use them to establish the existence and uniqueness of minimal semiinjective resolutions in Theorems \ref{thm:22}, \ref{thm:27}, \ref{thm:36}.  As a sample application, we will generalise Ringel--Zhang's result \cite[thm.\ 2]{Ringel-Zhang} on differential modules; see Theorem \ref{thm:RZ}.

Before stating the main definitions and results, we fix the following for the rest of the paper.
\begin{itemize}
\setlength\itemsep{4pt}

  \item  $\Bk$ is a hereditary noetherian commutative ring.
  
  \item  $A$ is a $\Bk$-algebra.
  
  \item  $\Mod( A )$ is the category of $A$-left modules.  The full subcategory of injective modules is $\Inj( A )$.

  \item  $Q$ is a small $\Bk$-preadditive category where $Q_0$ denotes the class of objects, $Q( -,- )$ the $\Hom$ functor.  Using the terminology of \cite[Setup 1.1]{HJ-Abel}, we assume that $Q$ satisfies the conditions {\em Hom finiteness}, {\em Local boundedness}, {\em Serre functor}, {\em Strong retraction}, and {\em Nilpotence}, and we denote the {\em pseudoradical} by $\fr$.
  
  \item  $S$ denotes the Serre functor of $Q$, characterised by the existence of isomorphisms $Q( p,q ) \cong \Hom_{ \Bk }\big( Q( q,Sp ),\Bk \big)$, natural in $p$ and $q$.

\end{itemize}
The conditions on $Q$ are satisfied in the examples above where $Q$ is given by Figure \ref{fig:linear_quiver} or Figure \ref{fig:cyclic_quiver} with the relations that $N$ consecutive arrows compose to zero for some fixed $N \geqslant 2$; see \cite[1.5]{HJ-Abel}.

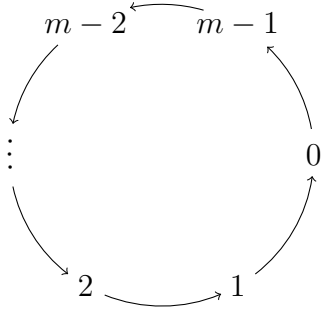
\begin{figure}
\begin{tikzpicture}[scale=2]
  \node at (0:1.0){$0$};
  \draw[->] (10:1.0) arc (10:46:1.0);
  \node at (60:1.0){$m-1$};
  \draw[->] (74:1.0) arc (74:102:1.0);
  \node at (120:1.0){$m-2$};
  \draw[->] (133:1.0) arc (133:168:1.0);
  \node at (175:1.0){$\cdot$};
  \node at (180:1.0){$\cdot$};
  \node at (185:1.0){$\cdot$};  
  \draw[->] (192:1.0) arc (192:232:1.0);
  \node at (240:1.0){$2$};
  \draw[->] (248:1.0) arc (248:293:1.0);
  \node at (300:1.0){$1$};
  \draw[->] (308:1.0) arc (308:352:1.0);
\end{tikzpicture}
\caption{An $m$-periodic chain complex is a diagram of this form.}
\label{fig:cyclic_quiver}
\end{figure}

We also need the following notation from \cite{HJ-JLMS} and \cite{HJ-TAMS}.
\begin{itemize}
\setlength\itemsep{4pt}

  \item  The category of $Q$-shaped diagrams with values in $\Mod( A )$ is
\[
  {}_{ Q,A }\!\Mod
  = \{\, \mbox{$\Bk$-linear functors $Q \xrightarrow{} \Mod( A )$} \,\}.
\]
It is a Grothendieck abelian category which generalises the abelian category of chain complexes of $A$-modules; see \cite[1.5]{HJ-Abel} and \cite[prop.\ 3.12]{HJ-JLMS}.

  \item  In ${}_{ Q,A }\!\Mod$, the $\Hom$ functor is $\Hom_{ Q,A }$, the $i$'th $\Ext$ functor is $\Ext^i_{ Q,A }$, and the full subcategory of injective objects is ${}_{ Q,A }\!\Inj$.  For $f,g$ in $\Hom_{ Q,A }( X,Y )$, we write $f \sim g$ if $f-g$ factors through an object of ${}_{ Q,A }\!\Inj$.

  \item  The class of exact objects in ${}_{ Q,A }\!\Mod$, defined in \cite[def.\ 4.1]{HJ-JLMS}, is $\cE$.  It generalises the class of exact chain complexes; see \cite[2.2 and 2.4]{HJ-Abel}.

  \item  The class of weak equivalences in ${}_{ Q,A }\Mod$, defined in \cite[prop.\ 6.3]{HJ-JLMS}, is $\weq$.  It generalises the class of quasi-isomorphisms of chain complexes; see \cite[2.3 and 2.4]{HJ-Abel}.
  
  \item  The $Q$-shaped derived category, obtained by inverting the morphisms in $\weq$, is
\[
  \cD_Q( A )
  = \weq^{ -1 }\!{}_{ Q,A }\!\Mod\!.
\]
It generalises the classic derived category of $A$.

\end{itemize}

The conditions on $Q$ are mainly due to Dell'Ambrogio--Stevenson--\v{S}\v{t}ov\'{\i}\v{c}ek \cite[thm.\ 1.6]{DSS}, while the definition of $\cD_Q( A )$ is based on the insight of Iyama--Minamoto that the key property of $Q$ which makes $\cD_Q( A )$ well behaved is the existence of a Serre functor on $Q$; see \cite{Iyama-Minamoto-1}, \cite[sec.\ 2]{Iyama-Minamoto-2}.

\begin{center}
{\bf Semiinjective objects}
\end{center}
The following is the key definition of this paper.  Note that part (i) appeared in \cite[3.1]{HJ-Abel}, and that the class $\cE^{ \perp }$ is used intensively in \cite{HJ-JLMS}, \cite{HJ-TAMS}.

\begin{DefIntro}
\label{def:A}
\begin{enumerate}
\setlength\itemsep{4pt}

  \item  A {\em semiinjective object} in ${}_{ Q,A }\!\Mod$ is an object in the class
\[
  \cE^{ \perp } =
  \{\, I \in {}_{ Q,A }\!\Mod \,|\, \Ext_{ Q,A }^1( \cE,I ) = 0 \,\}.
\]

  \item  A {\em minimal semiinjective object} in ${}_{ Q,A }\!\Mod$ is a semiinjective object whose only subobject in ${}_{ Q,A }\!\Inj$ is $0$.

  \item  A {\em semiinjective resolution} of $X$ in ${}_{ Q,A }\!\Mod$ is a weak equivalence $X \xrightarrow{} I$ with $I$ semiinjective.

  \item  A {\em minimal semiinjective resolution} of $X$ in ${}_{ Q,A }\!\Mod$ is a weak equivalence $X \xrightarrow{} I$ with $I$ minimal semiinjective.

\end{enumerate}
\end{DefIntro}

These concepts generalise (minimal) semiinjective chain complexes and (minimal) semiinjective resolutions of chain complexes; see \cite[prop.\ 2.3.14 and sec.\ 2.4]{Garcia-Rozas-book}, \cite[3.2]{HJ-Abel}.  Semiinjective chain complexes are due to B\"{o}kstedt--Neeman \cite[sec.\ 2]{Boekstedt-Neeman} (who used the term ``special complexes of injectives'') and Garc\'{\i}a-Rozas \cite[prop.\ 2.3.4]{Garcia-Rozas-book} (who used the term ``DG-injective complexes'').

One reason for the interest in semiinjective objects is that they can be used to compute $\Hom$ spaces in $\cD_Q( A )$, which are otherwise hard to access.  Each object $Y$ in ${}_{ Q,A }\!\Mod$ has a semiinjective resolution $Y \xrightarrow{} I$ by Theorem \ref{thm:22}(i).  If $X$ is also an object in ${}_{ Q,A }\!\Mod$, then
\begin{equation}
\label{equ:Hom_by_semiinjective_resolution}
  \Hom_{ \cD_Q( A ) }( X,Y )
  \cong
  \Hom_{ Q,A }( X,I )/\sim
\end{equation}
by Proposition \ref{pro:weq_property}(ii).  Equation \eqref{equ:Hom_by_semiinjective_resolution} generalises the computation of $\Hom$ spaces in $\cD( A )$ using semiinjective resolutions of chain complexes; see \cite[cor.\ 7.3.22]{CFH-book}.

\begin{center}
{\bf Minimal semiinjective objects and resolutions}
\end{center}

Our main results provide different characterisations of minimal semiinjective objects in Theorem \ref{thm:15_23_25} and establish the existence and uniqueness of minimal semiinjective resolutions in Theorems \ref{thm:22}, \ref{thm:27}, \ref{thm:36}.  Appendix \ref{app:complexes} explains how these results can be specialised to the theory of minimal semiinjective resolutions in $\cD( A )$.

\begin{ThmIntro}
\label{thm:22}
\begin{enumerate}
\setlength\itemsep{4pt}

  \item  Each $X$ in ${}_{ Q,A }\!\Mod$ has a minimal semiinjective resolution.

  \item  Each semiinjective object $I$ in ${}_{ Q,A }\!\Mod$ has the form $I = I' \oplus J'$ in ${}_{ Q,A }\!\Mod$ with $I'$ a minimal semiinjective object and $J'$ in ${}_{ Q,A }\!\Inj$.  

\end{enumerate}
\end{ThmIntro}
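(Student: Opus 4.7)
The plan is to prove (ii) first and deduce (i) as a direct consequence. For (i), some (not necessarily minimal) semiinjective resolution $X \to I$ exists by invoking the abelian model structure on ${}_{Q,A}\!\Mod$ established in \cite{HJ-JLMS}, whose fibrant replacement functor sends any object via a weak equivalence into $\cE^\perp$; combining this with (ii) and trimming away the injective summand then yields a minimal semiinjective resolution of $X$.

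For (ii), the strategy is to isolate a maximal injective direct summand of $I$ by Zorn's lemma. Consider the poset $\cP$ whose elements are direct-summand decompositions $(J,I'')$ with $I = J \oplus I''$ and $J \in {}_{Q,A}\!\Inj$, ordered by $(J_1,I_1'') \leq (J_2,I_2'')$ iff $J_1 \subseteq J_2$ and $I_2'' \subseteq I_1''$; a direct check shows this forces the refinement identities $J_2 = J_1 \oplus (J_2 \cap I_1'')$ and $I_1'' = (J_2 \cap I_1'') \oplus I_2''$. For a chain $\{(J_\alpha,I_\alpha'')\}_\alpha$, I would take $J := \bigcup_\alpha J_\alpha$ and $I'' := \bigcap_\alpha I_\alpha''$ and verify that $J \in {}_{Q,A}\!\Inj$ and $I = J \oplus I''$, producing an upper bound. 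Zorn then supplies a maximal element $(J',I')$.

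Since $\cE^\perp$ is closed under direct summands, $I' \in \cE^\perp$. For minimality, suppose $0 \neq K \hookrightarrow I'$ with $K \in {}_{Q,A}\!\Inj$; by injectivity $I' = K \oplus L$, so $I = (J' \oplus K) \oplus L$ exhibits a pair strictly above $(J',I')$ in $\cP$---contradicting maximality. Hence $I'$ is minimal semiinjective, completing (ii). Returning to (i), the split projection $I \to I'$ has kernel $J' \in {}_{Q,A}\!\Inj \subseteq \cE$ (injective objects are exact, generalising the fact that injective chain complexes are contractible), hence is a weak equivalence, so the composite $X \to I \to I'$ is the desired minimal semiinjective resolution.

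The principal obstacle is the chain-condition step in Zorn's lemma: establishing that $J = \bigcup_\alpha J_\alpha$ lies in ${}_{Q,A}\!\Inj$ and admits complement $\bigcap_\alpha I_\alpha''$ inside $I$. A filtered colimit of injectives need not be injective in a general Grothendieck category, so the argument must exploit the concrete structure of ${}_{Q,A}\!\Inj$ afforded by the standing hypotheses on $Q$---particularly the Serre functor $S$, local boundedness, and nilpotence of the pseudoradical $\fr$---under which smooth unions of compatibly split-embedded injective summands should retain injectivity, with their complements assembling as the intersection $\bigcap_\alpha I_\alpha''$.
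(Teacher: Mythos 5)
Your reduction of (i) to (ii) is sound and matches the paper's: a (not necessarily minimal) semiinjective resolution $X\to I$ exists by completeness of the cotorsion pair $( \cE,\cE^{ \perp } )$, and composing with the split projection $I \to I'$ (a weak equivalence since its kernel $J'$ lies in ${}_{ Q,A }\!\Inj \subseteq \cE$) finishes the argument. The genuine gap is in (ii), at exactly the step you flag as the principal obstacle: producing an upper bound for a chain $\{ ( J_\alpha,I''_\alpha ) \}$ in your poset of decompositions. This fails for two separate reasons. First, even as a purely additive statement, $I = \bigl( \bigcup_\alpha J_\alpha \bigr) \oplus \bigl( \bigcap_\alpha I''_\alpha \bigr)$ is false for compatible chains of direct-sum decompositions: take $I = \prod_{ n \geqslant 1 } k$, $J_n = k^n \times 0$, $I''_n = 0 \times \prod_{ m>n } k$; then $\bigcup_n J_n = \bigoplus_n k$, $\bigcap_n I''_n = 0$, and their sum is a proper subobject of $I$. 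So the complements do not in general ``assemble as the intersection'', and no formal poset argument can make them do so. Second, $\bigcup_\alpha J_\alpha$ need not lie in ${}_{ Q,A }\!\Inj$: the ring $A$ is arbitrary, so ${}_{ Q,A }\!\Mod$ is not locally noetherian and filtered unions of injective subobjects need not be injective; the standing hypotheses on $Q$ (Serre functor, local boundedness, nilpotence) constrain $Q$, not $A$, and do not repair this.

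The paper avoids both problems by never running Zorn's lemma on decompositions of $I$ itself. For each $q \in Q_0$ it instead applies Zorn to the submodules $M \subseteq E_qI$ whose adjoint morphism $F_qM \to I$ is a monomorphism; there the upper bound for a chain is simply the union, and the only verification needed is that a filtered colimit of monomorphisms into $I$ is a monomorphism, which holds in any Grothendieck category (Lemma \ref{lem:20_21}(i)). A maximal such $D_q$ is shown to be in $\Inj( A )$ via an essentiality argument (Lemmas \ref{lem:19} and \ref{lem:20_21}(ii)), and the summand $J' = \coprod_{ q } F_qD_q$ is in ${}_{ Q,A }\!\Inj$ not by any limit argument but because it coincides with the product $\prod_{ q } G_{ Sq }D_q$ of injective objects; it embeds split into $I$ by Lemma \ref{lem:18}. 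Minimality of the complement $I'$ is then checked against the criterion of Theorem \ref{thm:15_23_25}(v), again using maximality of the $D_q$. To salvage your route you would need to prove that the union of a chain of injective direct summands of a semiinjective object is again a direct summand lying in ${}_{ Q,A }\!\Inj$, and the example above shows this cannot come for free.
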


\begin{ThmIntro}
\label{thm:27}
If $I \xrightarrow{ i } I'$ in ${}_{ Q,A }\!\Mod$ is a weak equivalence between minimal semiinjective objects, then $i$ is an isomorphism in ${}_{ Q,A }\!\Mod$.
\end{ThmIntro}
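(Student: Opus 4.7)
The plan is to lift the $\cD_Q(A)$-inverse of $i$ to a morphism $j:I'\to I$ in ${}_{Q,A}\Mod$, use a characterisation of minimality supplied by Theorem~\ref{thm:15_23_25} to show that the composites $ji$ and $ij$ are automorphisms, and then deduce that $i$ itself is an isomorphism.

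To produce $j$, I would apply equation~\eqref{equ:Hom_by_semiinjective_resolution} to the semiinjective objects $I$ and $I'$, obtaining isomorphisms
\[
  \Hom_{\cD_Q(A)}(I',I) \cong \Hom_{Q,A}(I',I)/{\sim},
  \qquad
  \End_{\cD_Q(A)}(M) \cong \End_{Q,A}(M)/{\sim}
\]
for $M \in \{I,I'\}$. Since $i$ is a weak equivalence it becomes invertible in $\cD_Q(A)$, and the first isomorphism lifts $i^{-1}$ to a morphism $j:I' \to I$. Tracing the derived-category identities $i^{-1}i = \id_I$ and $i\,i^{-1} = \id_{I'}$ back through the second pair of isomorphisms yields $ji \sim \id_I$ and $ij \sim \id_{I'}$, i.e.\ the differences $\id_I - ji$ and $\id_{I'} - ij$ factor through objects of ${}_{Q,A}\Inj$.

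Next I would invoke Theorem~\ref{thm:15_23_25}, which I expect to characterise minimal semiinjective objects $M$ by the property that every endomorphism $\sim$-equivalent to $\id_M$ is an automorphism (the natural $Q$-shaped analogue of the classical criterion for minimal DG-injective complexes). Applied to $ji$ and $ij$ this forces both to be automorphisms, of $I$ and $I'$ respectively; consequently $i$ admits the retraction $(ji)^{-1}j$ and the section $j(ij)^{-1}$, and so is an isomorphism in ${}_{Q,A}\Mod$.

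The main obstacle I anticipate is whether Theorem~\ref{thm:15_23_25} actually supplies the characterisation used above. If it instead formulates minimality as the absence of nonzero injective direct summands, closer in spirit to Definition~\ref{def:A}(ii), then an additional lemma is needed: any endomorphism $f$ of a minimal semiinjective $I$ with $\id_I - f = \alpha\beta$ for some $\alpha:J\to I$, $\beta:I\to J$, and $J\in{}_{Q,A}\Inj$, must be an automorphism. I would attempt to prove this by analysing $\Ker f$ and $\Coker f$ and using the factorisation through the injective $J$ together with minimality of $I$ to force both to vanish.
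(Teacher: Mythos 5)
Your proof is correct, and the characterisation you hoped for is indeed supplied by the paper: condition (iv) of Theorem~\ref{thm:15_23_25} states that an endomorphism of a minimal semiinjective object which induces an automorphism in $\cD_Q(A)$ is already an automorphism, and via Proposition~\ref{pro:weq_property}(i) this is exactly the statement about endomorphisms $\sim$-equivalent to the identity that you use; so your anticipated obstacle does not arise and the fallback lemma is unnecessary. Your route differs from the paper's, though. The paper instead invokes condition (iii) of Theorem~\ref{thm:15_23_25}: the weak equivalence $i$ is a split monomorphism, so it has a retraction $i'$ with $i'i=\id_I$; by the two-out-of-three property of weak equivalences \cite[prop.\ 5.12]{Hovey_MathZ} the retraction $i'$ is itself a weak equivalence out of the minimal semiinjective $I'$, hence also a split monomorphism, hence both monic and epic, hence an isomorphism, forcing $i=(i')^{-1}$. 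Your argument trades the two-out-of-three property for an explicit lift of the derived-category inverse through the bijection $\Hom_{Q,A}(I',I)/{\sim}\;\cong\;\Hom_{\cD_Q(A)}(I',I)$, and then applies minimality of $I$ and $I'$ separately to conclude that $ji$ and $ij$ are automorphisms; the paper's argument is marginally shorter because condition (iii) already packages the lifting step (its proof of (iv$'$)\,$\Rightarrow$\,(iii) is essentially your lifting argument). Both proofs use minimality of both objects and both lean on Theorem~\ref{thm:15_23_25}, just on different clauses of it.
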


\begin{ThmIntro}
\label{thm:36}
If $X \xrightarrow{ x } I$ and $X \xrightarrow{ x' } I'$ are minimal semiinjective resolutions in ${}_{ Q,A }\!\Mod$ then:
\begin{itemize}
\setlength\itemsep{4pt}

  \item  The diagram
\[
\vcenter{
  \xymatrix @+0.5pc {
    X \ar^{ x }[r] \ar_{ x' }[d] & I \ar@{.>}^{ i }[dl] \\
    I' \\
                    }
        }
\]
can be completed with a morphism $i$ such that $ix \sim x'$ in ${}_{ Q,A }\!\Mod$.

  \item  The morphism $i$ is unique up to equivalence under ``$\sim$''.

  \item  Each completing morphism $i$ is an isomorphism in ${}_{ Q,A }\!\Mod$.

\end{itemize}
\end{ThmIntro}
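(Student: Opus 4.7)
The plan is to reduce all three parts to the $\Hom$-computation \eqref{equ:Hom_by_semiinjective_resolution} together with Theorem \ref{thm:27}. Since $I'$ is semiinjective, its identity is a trivial semiinjective resolution, so applying \eqref{equ:Hom_by_semiinjective_resolution} with target $I'$ yields canonical isomorphisms
\[
  \Hom_{Q,A}(I, I')/{\sim} \;\xrightarrow{\;\cong\;}\; \Hom_{\cD_Q(A)}(I, I')
  \quad\text{and}\quad
  \Hom_{Q,A}(X, I')/{\sim} \;\xrightarrow{\;\cong\;}\; \Hom_{\cD_Q(A)}(X, I'),
\]
which send each $\sim$-class $[f]$ to the morphism $f$ viewed in $\cD_Q(A)$. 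By construction, these identifications intertwine the precomposition maps $(-)\circ x$ on both sides. Because $x$ is a weak equivalence, it becomes invertible in $\cD_Q( A )$, so the bottom precomposition is a bijection and hence so is the top one.

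Parts (i) and (ii) now follow immediately: the resulting bijection $\Hom_{Q,A}(I, I')/{\sim} \to \Hom_{Q,A}(X, I')/{\sim}$, $[i] \mapsto [ix]$, is surjective (producing an $i$ with $ix \sim x'$) and injective (forcing $i$ to be unique modulo $\sim$). For part (iii), the relation $ix \sim x'$ transports under the identification above to the equality $i \circ x = x'$ in $\cD_Q(A)$; since $x$ and $x'$ are weak equivalences, they are isomorphisms in $\cD_Q(A)$, and so is $i$. Hence $i$ is a morphism of ${}_{Q,A}\!\Mod$ becoming an isomorphism in $\cD_Q(A)$, which (by saturation of $\weq$ for the localisation defining $\cD_Q(A)$) means that $i$ is itself a weak equivalence. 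Theorem \ref{thm:27}, applied to this weak equivalence between minimal semiinjective objects, promotes it to an isomorphism in ${}_{Q,A}\!\Mod$.

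The main point to verify carefully is the compatibility of precomposition by $x$ with the isomorphism in \eqref{equ:Hom_by_semiinjective_resolution}, but this is built into the construction of that isomorphism via Proposition \ref{pro:weq_property}(ii). Apart from this and the invocation of saturation of $\weq$, the theorem is a formal consequence of the $\Hom$-computation combined with Theorems \ref{thm:22} and \ref{thm:27}.
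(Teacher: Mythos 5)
Your proposal is correct and follows essentially the same route as the paper: the paper deduces the bijection $\Hom_{Q,A}(I,I')/{\sim}\to\Hom_{Q,A}(X,I')/{\sim}$ directly from Proposition \ref{pro:weq_property}(iii) (whose proof is exactly your comparison of $\Hom$ sets in ${}_{Q,A}\!\Mod$ and $\cD_Q(A)$), and handles the third bullet just as you do, justifying the saturation step by citing \cite[thm.\ 1.2.10(iv)]{Hovey_Book} before invoking Theorem \ref{thm:27}.
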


Note that the first bullet in Theorem \ref{thm:36} cannot be improved to say $ix = x'$ instead of $ix \sim x'$.

The proof of Theorem \ref{thm:22} uses the full force of the results on the $Q$-shaped derived category established in \cite{HJ-JLMS} and \cite{HJ-TAMS} as well as most of the machinery developed in this paper.  The easier Theorems \ref{thm:27} and \ref{thm:36} could have been obtained as consequences of \cite[cors.\ 1 and 2]{Roig} because Theorem \ref{thm:15_23_25}(iii) implies that our notion of minimal semiinjective resolutions is an instance of the right minimal models of \cite[sec.\ 1]{Roig}.  However, we provide short, self contained proofs for the benefit of the reader.

\begin{center}
{\bf Differential modules}
\end{center}

As a sample application of our theory, we will generalise Ringel--Zhang's result \cite[thm.\ 2]{Ringel-Zhang} on differential modules.  

A differential module over the ring $A$ is a pair $( M,\partial )$ with $M$ in $\Mod( A )$ and $M \xrightarrow{ \partial } M$ an endomorphism with $\partial^2 = 0$.  This notion was defined by Cartan--Eilenberg under the name ``modules with differentiation'', see \cite[p.\ 53]{Cartan-Eilenberg-Book}.  There is a Grothendieck abelian category $\Diff( A )$ of differential modules over $A$ in which the notions of injective and Gorenstein injective objects make sense, see \cite[sec.\ 7]{Krause}.  The homology functor $\Diff( A ) \xrightarrow{ \H } \Mod( A )$ is defined on objects by $\H( M,\partial ) = \Z( M,\partial )/\B( M,\partial )$ where $\Z( M,\partial ) = \Ker \partial$ is the cycles, $\B( M,\partial ) = \Image \partial$ the boundaries.  
\label{page:Diff}

A notable result on differential modules was proved by Ringel--Zhang in \cite[thm.\ 2]{Ringel-Zhang}.  They worked with finite dimensional differential modules over the path algebra of a finite, acyclic quiver.  We provide the following generalisation to arbitrary differential modules over a hereditary ring.

\begin{ThmIntro}
\label{thm:RZ}
Assume that $A$ is a left hereditary ring.  Then the homology functor $\Diff( A ) \xrightarrow{ \H } \Mod( A )$ induces a bijection 
\begin{equation}
\label{equ:RZ:a}
  \left\{\!
    \begin{array}{l}
      \mbox{Isomorphism classes of Gorenstein injective objects} \\[1mm]
      \mbox{without non-zero injective summands in $\Diff( A )$}
    \end{array}
  \!\right\}
  \xrightarrow{\H}
  \left\{\!
    \begin{array}{l}
      \mbox{Isomorphism classes} \\[1mm]
      \mbox{in $\Mod( A )$}      
    \end{array}
  \!\right\}.
\end{equation}
\end{ThmIntro}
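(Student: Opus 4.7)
The plan is to specialise the paper's machinery to the $\Bk$-linear category $Q$ with one object and endomorphism ring $\Bk[\partial]/(\partial^2)$, for which ${}_{Q,A}\Mod = \Diff(A)$. Under this identification the class $\cE$ consists of the differential modules with $\H = 0$, and weak equivalences are the $\H$-isomorphisms. A preliminary check (not requiring the hereditary hypothesis) identifies the semiinjective objects $\cE^\perp$ of Definition \ref{def:A}(i) with the Gorenstein injective objects of $\Diff(A)$, and identifies ${}_{Q,A}\Inj$ with the injective differential modules, so that ``minimal semiinjective'' in the sense of Definition \ref{def:A}(ii) matches ``Gorenstein injective without a non-zero injective summand'', the class on the left of \eqref{equ:RZ:a}.

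With this dictionary, I define a candidate inverse to the map in \eqref{equ:RZ:a} by sending $[M]$ to $[I_M]$, where $(M, 0) \to I_M$ is a minimal semiinjective resolution guaranteed by Theorem \ref{thm:22}(i) and well-defined up to isomorphism by Theorem \ref{thm:36}. Since the resolution map is a weak equivalence, $\H(I_M) \cong \H(M, 0) = M$, which proves one composition.

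For the other composition, I fix a minimal semiinjective $I$ with $\H(I) = M$ and aim to show $I \cong I_M$. A morphism $(M, 0) \to I$ in $\Diff(A)$ is exactly an $A$-linear map $M \to \Z(I)$, and it is a weak equivalence iff its composite with the quotient $\Z(I) \twoheadrightarrow \Z(I)/\B(I) = M$ is an automorphism of $M$; equivalently, iff the short exact sequence
\[
0 \longrightarrow \B(I) \longrightarrow \Z(I) \longrightarrow M \longrightarrow 0
\]
of $A$-modules splits. Granted such a splitting, $(M, 0) \to I$ would be a weak equivalence onto a minimal semiinjective object, hence itself a minimal semiinjective resolution of $(M, 0)$, and Theorem \ref{thm:36}(iii) would give $I \cong I_M$.

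The main obstacle, and the only step that genuinely uses the hypothesis that $A$ is left hereditary, is to produce this splitting. My plan is to show that $\B(I) \subseteq I$ is injective in $\Mod( A )$, from which the splitting is immediate. To do so I will test the vanishing $\Ext^1_{\Diff( A )}( E, I ) = 0$, which holds because $I \in \cE^\perp$, against the ``shift'' exact differential modules $E_X = X \oplus X$ (for an arbitrary $A$-module $X$) equipped with the differential $(a, b) \mapsto (0, a)$, and translate the vanishing in $\Diff( A )$ into the vanishing of $\Ext^1_A( X, \B( I ) )$ by comparing injective coresolutions in $\Diff( A )$ and $\Mod( A )$; here $\gldim A \leqslant 1$ is what allows the auxiliary Ext computation to collapse into the injectivity statement. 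All other steps are formal applications of Theorems \ref{thm:22}, \ref{thm:27}, and \ref{thm:36}.
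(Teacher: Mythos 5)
Your proposal is correct and its overall strategy coincides with the paper's: the inverse map sends $M$ to a minimal semiinjective resolution of $(M,0)$ (Theorems \ref{thm:22} and \ref{thm:36}), one composite is immediate since weak equivalences preserve $\H$, and the other reduces to splitting the sequence $0 \to \B(I) \to \Z(I) \to \H(I) \to 0$, which both you and the paper obtain from injectivity of $\B(I)$ via the hereditary hypothesis. Three comments on the points where you deviate. (1) Your parenthetical claim that the identification of $\cE^{\perp}$ with the Gorenstein injective objects of $\Diff(A)$ does ``not require the hereditary hypothesis'' is unjustified: the paper's identification (\ref{altbfhpg:The_class_Eperp}) rests on \cite[thm.\ E]{HJ-TAMS}, which needs finite left global dimension, and in general every object of $\cE^{\perp}$ has injective underlying module (test against the exact objects $F_qX$, using that $F_q$ is exact and preserves projectives), which Gorenstein injective differential modules need not have. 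This is harmless here, since the hypothesis is in force, but the logical dependence should not be waved away. (2) Your Ext-computation route to injectivity of $\B(I)$ is more roundabout than necessary: testing $\Ext^1_{\Diff(A)}(-,I)=0$ against $E_X = F_qX$ yields only that the underlying module of $I$ is injective, after which the hereditary hypothesis must still be invoked to handle the quotient $\B(I) \cong I/\Z(I)$ (equivalently, a dimension shift along $0 \to \Z(I) \to I \to \B(I) \to 0$ using $\gldim A \leqslant 1$); the paper gets injectivity of the underlying module from \cite[thm.\ E]{HJ-TAMS} and then does the quotient step in one line. (3) Your replacement of Lemma \ref{lem:MA9} by the direct observation that a section of $\Z(I) \to \H(I)$ lands in $\Z(I)=\Ker\partial$ and hence defines a quasi-isomorphism $(\H(I),0) \to I$ is a genuine simplification of the paper's pushout construction.
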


To place this in a wider context, recall from \cite[sec.\ 7]{Krause} that if $\cA$ is a Grothendieck abelian category, then $\GInj \cA$, the full subcategory of Gorenstein injective objects of $\cA$, is a Frobenius category with projective-injective objects given by $\Inj \cA$, the injective objects of $\cA$.  The naive quotient category $\GInj \cA / \Inj \cA$ is triangulated by \cite[thm.\ I.2.6]{Happel-book} and is important in the context of Gorenstein approximations and Tate cohomology.
Understanding the objects of $\GInj \cA / \Inj \cA$ amounts to understanding the objects of $\GInj \cA$ up to injective summands, see \cite[thm.\ 13.7]{Hilton-book}, and this is accomplished by Theorem \ref{thm:RZ} for $\cA = \Diff( A )$.

Theorem \ref{thm:RZ} will be proved by translating the left hand set of Equation \eqref{equ:RZ:a} to the set of isomorphism classes of minimal semiinjective differential modules.  The inverse bijection to \eqref{equ:RZ:a} is induced by sending $M$ to a minimal semiinjective resolution of $( M,0 )$.  Theorem \ref{thm:RZ} is an injective analogue of \cite[cor.\ 1.4]{Wei}.

\begin{center}
{\bf Structure of the paper}
\end{center}
Section \ref{sec:lemmas} proves some preliminary results.  Section \ref{sec:main} provides different characterisations of minimal semiinjective objects in Theorem \ref{thm:15_23_25} and uses them to prove Theorems \ref{thm:22}, \ref{thm:27}, \ref{thm:36}.  Section \ref{sec:differential} proves Theorem \ref{thm:RZ}.  Appendix \ref{app:complexes} shows how our theory specialises to the theory of minimal semiinjective resolutions in $\cD( A )$.

\section{Preliminary results}
\label{sec:lemmas}

This section proves some preliminary results required to establish the theorems stated in the introduction.

\begin{Proposition}
\label{pro:weq_property}
${\;}$
\begin{enumerate}
\setlength\itemsep{4pt}

  \item  There are isomorphisms
\[
\vcenter{
  \xymatrix @+0.5pc {
    \Hom_{ Q,A }( X,I )/\sim \ar[r] & \Hom_{ \cD_Q( A ) }( X,I ), \\
                    }
        }
\]
natural with respect to $X$ in ${}_{ Q,A }\!\Mod$ and $I$ in $\cE^{ \perp }$.

  \item  If $X$ and $Y$ are in ${}_{ Q,A }\!\Mod$ and $Y \xrightarrow{} I$ is a semiinjective resolution, then there is an isomorphism
\[
\vcenter{
  \xymatrix @+0.5pc {
    \Hom_{ Q,A }( X,I )/\sim \ar[r] & \Hom_{ \cD_Q( A ) }( X,Y ). \\
                    }
        }
\]

  \item  If $X \xrightarrow{} Y$ is a weak equivalence in ${}_{ Q,A }\!\Mod$ and $I$ is in $\cE^{ \perp }$, then the induced map
\[
\vcenter{
  \xymatrix @+0.5pc {
    \Hom_{ Q,A }( Y,I )/\sim \ar[r] & \Hom_{ Q,A }( X,I )/\sim \\
                    }
        }
\]
is an isomorphism.

\end{enumerate}
\end{Proposition}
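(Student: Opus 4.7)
The plan is to prove (i) first; (ii) and (iii) will then follow by formal arguments. For (i), I would invoke the abelian model structure on ${}_{Q,A}\!\Mod$ built in \cite{HJ-JLMS} and \cite{HJ-TAMS}, whose weak equivalences are $\weq$ and whose associated homotopy category is $\cD_Q(A)$ by construction. The key features of this model structure that I would use are: the fibrant objects coincide with $\cE^{\perp}$, every object is cofibrant, and the objects that are simultaneously fibrant, cofibrant, and acyclic (that is, the ``trivial'' fibrant-cofibrant objects) are precisely the injectives ${}_{Q,A}\!\Inj$. The general theory of abelian model categories then yields, for every cofibrant $X$ and fibrant $I$, a natural bijection $\Hom_{Q,A}(X,I)/{\simeq} \to \Hom_{\cD_Q(A)}(X,I)$, where $f\simeq g$ if and only if $f-g$ factors through such a trivial fibrant-cofibrant object. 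In our setting this homotopy relation coincides with $\sim$, which gives (i); naturality in both variables is immediate from the construction.

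For (ii), the semiinjective resolution $Y \to I$ is a weak equivalence, hence induces an isomorphism $\Hom_{\cD_Q(A)}(X,I) \xrightarrow{\sim} \Hom_{\cD_Q(A)}(X,Y)$ by post-composing with its inverse in $\cD_Q(A)$. Composing this with the bijection from (i) applied at $(X,I)$ produces the asserted isomorphism. For (iii), apply (i) at both $(X,I)$ and $(Y,I)$ to rewrite the two sides as $\Hom$-sets in $\cD_Q(A)$; since $X\to Y$ is a weak equivalence, it becomes an isomorphism in $\cD_Q(A)$, and by naturality in (i) the induced map on $\Hom_{Q,A}(-,I)/{\sim}$ matches the precomposition map in $\cD_Q(A)$, which is therefore an isomorphism.

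The main obstacle will be pinning down the identification of the model-theoretic homotopy relation with the algebraic relation $\sim$. Concretely, this means either constructing a path object $I \rightarrowtail PI \twoheadrightarrow I\times I$ for each $I\in \cE^{\perp}$ whose ``fold kernel'' lies in ${}_{Q,A}\!\Inj$, or citing the standard abelian-model-category fact that two maps from a cofibrant to a fibrant object are homotopic precisely when their difference factors through a trivial fibrant-cofibrant object. A secondary point is to verify from \cite{HJ-JLMS} and \cite{HJ-TAMS} that the trivial fibrant-cofibrant objects of the relevant structure are exactly ${}_{Q,A}\!\Inj$ and that every object is cofibrant. Once these identifications are in place, the rest of the argument is purely formal.
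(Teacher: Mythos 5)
Your proposal is correct and follows essentially the same route as the paper: the paper obtains (i) by citing the Hovey triple $({}_{Q,A}\!\Mod,\cE,\cE^{\perp})$ from \cite[thm.\ 6.1(b)]{HJ-JLMS} together with \cite[thm.\ 2.6]{Gillespie-BLMS} (which is exactly the ``standard abelian-model-category fact'' you mention, with core ${}_{Q,A}\!\Mod\cap\cE\cap\cE^{\perp}={}_{Q,A}\!\Inj$ by \cite[thm.\ 4.4(b)]{HJ-JLMS}), and then deduces (ii) and (iii) by the same formal arguments you give. The ``main obstacle'' you flag is precisely what the citation of Gillespie's theorem resolves, so no path-object construction is needed.
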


\begin{proof}
(i) Use \cite[thm.\ 6.1(b) and its proof]{HJ-JLMS} to get a ``Hovey triple'' $( {}_{ Q,A }\!\Mod,\cE,\cE^{ \perp } )$.  Then apply \cite[thm.\ 2.6]{Gillespie-BLMS}, where the Hovey triple is called ``abelian model category'', noting that the ``core'' ${}_{ Q,A }\!\Mod \cap \cE \cap \cE^{ \perp }$ is ${}_{ Q,A }\!\Inj$ by \cite[thm.\ 4.4(b)]{HJ-JLMS}.  

(ii) Compose the isomorphism from part (i) with the inverse of the isomorphism
\[
\vcenter{
  \xymatrix @+0.5pc {
    \Hom_{ \cD_Q( A ) }( X,Y ) \ar[r] & \Hom_{ \cD_Q( A ) }( X,I ) \\
                    }
        }
\]
which results from the weak equivalence $Y \xrightarrow{} I$ inducing an isomorphism in $\cD_Q( A )$.

(iii) By part (i) the morphism $X \xrightarrow{} Y$ induces a commutative square
\[
\vcenter{
  \xymatrix @+0.5pc {
    \Hom_{ Q,A }( Y,I )/\sim \ar[r] \ar[d] & \Hom_{ \cD_Q( A ) }( Y,I ) \ar[d] \\
    \Hom_{ Q,A }( X,I )/\sim \ar[r] & \Hom_{ \cD_Q( A ) }( X,I ) \\
                    }
        }
\]
where the horizontal maps are isomorphisms.  The right hand vertical map is an isomorphism because the weak equivalence $X \xrightarrow{} Y$ induces an isomorphism in $\cD_Q( A )$, so the left hand vertical map is also an isomorphism.
\end{proof}

\begin{Proposition}
\label{pro:10}
Let $0 \xrightarrow{} X' \xrightarrow{ \xi' } X \xrightarrow{ \xi } X'' \xrightarrow{} 0$ be a short exact sequence in ${}_{ Q,A }\!\Mod$.
\begin{enumerate}
\setlength\itemsep{4pt}

  \item  $\xi'$ is a weak equivalence $\Leftrightarrow$ $X''$ is in $\cE$.

  \item  $\xi$ is a weak equivalence $\Leftrightarrow$ $X'$ is in $\cE$.

\end{enumerate}
\end{Proposition}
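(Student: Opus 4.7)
The plan is to exploit the abelian model structure (Hovey triple) $( {}_{ Q,A }\!\Mod,\cE,\cE^{ \perp } )$ from \cite[thm.\ 6.1(b)]{HJ-JLMS} that was already invoked in the proof of Proposition~\ref{pro:weq_property}. Because the ``cofibrant'' class in this Hovey triple is all of $ {}_{ Q,A }\!\Mod $, Hovey's correspondence identifies the cofibrations with the monomorphisms and the trivial cofibrations with the monomorphisms whose cokernel lies in $\cE$; dually, the fibrations are the epimorphisms with kernel in $\cE^{ \perp }$ and the trivial fibrations are the epimorphisms with kernel in $\cE \cap \cE^{ \perp } = {}_{ Q,A }\!\Inj$. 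I will also use that $\cE$ is thick (closure under two-out-of-three in short exact sequences, which is built into the definition of a Hovey triple) and that the model structure is left proper because every object is cofibrant.

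Part (i) falls out of these characterisations: $\xi'$ is a monomorphism, hence a cofibration, and a cofibration is a weak equivalence if and only if it is a trivial cofibration, if and only if $\Coker \xi' = X''$ belongs to $\cE$.

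For the ($\Leftarrow$) direction of (ii), applying part (i) to the short exact sequence $0 \to 0 \to X' \to X' \to 0$ shows that $0 \to X'$ is a weak equivalence when $X' \in \cE$, and two-out-of-three then makes $X' \to 0$ a weak equivalence as well. Since all objects are cofibrant the model structure is left proper, so the cobase change of the weak equivalence $X' \to 0$ along the cofibration $\xi'$ in the pushout square
\[
\vcenter{
  \xymatrix @+0.5pc {
    X' \ar^{ \xi' }[r] \ar[d] & X \ar^{ \xi }[d] \\
    0 \ar[r] & X'' \\
                    }
        }
\]
is again a weak equivalence, and this cobase change is precisely $\xi$.

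The main obstacle is the ($\Rightarrow$) direction of (ii). Here my plan is to factor $\xi = q \circ i$ as a trivial cofibration $X \xrightarrow{ i } Z$ (cokernel $C \in \cE$) followed by a fibration $Z \xrightarrow{ q } X''$ (kernel $K \in \cE^{ \perp }$). Two-out-of-three forces $q$ to be a weak equivalence, so $q$ is in fact a trivial fibration, and $K \in \cE \cap \cE^{ \perp } = {}_{ Q,A }\!\Inj \subseteq \cE$. The canonical morphism of short exact sequences
\[
\vcenter{
  \xymatrix @+0.5pc {
    0 \ar[r] & X' \ar^{ \xi' }[r] \ar@{..>}[d] & X \ar^{ \xi }[r] \ar^{ i }[d] & X'' \ar[r] \ar@{=}[d] & 0 \\
    0 \ar[r] & K \ar[r] & Z \ar^{ q }[r] & X'' \ar[r] & 0 \\
                    }
        }
\]
then yields, via the snake lemma, an induced short exact sequence $0 \to X' \to K \to C \to 0$; since $K, C \in \cE$, thickness of $\cE$ forces $X' \in \cE$. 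As a sanity check and slicker alternative, one can appeal to the triangulated structure on $\cD_Q( A )$: the short exact sequence induces a distinguished triangle $X' \to X \to X'' \to \Sigma X'$ in which $\xi'$ (respectively $\xi$) is an isomorphism iff the third vertex is zero, and by the mono characterisation established in part (i), a vertex $Y$ vanishes in $\cD_Q( A )$ iff $Y \in \cE$.
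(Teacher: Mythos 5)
Your argument is correct, but it takes a genuinely different route from the paper's. The paper deduces both parts in three lines from the long exact sequences of the homology and cohomology functors $\BH^{ [q] }_i$ and $\BH_{ [q] }^i$ of \cite[def.\ 7.11]{HJ-JLMS}, combined with \cite[thms.\ 7.1 and 7.2]{HJ-JLMS}, which characterise membership of $\cE$ and weak equivalences by the vanishing, respectively the invertibility, of these functors; this directly mimics the classical argument for quasi-isomorphisms of chain complexes. You instead work entirely inside the abelian model structure attached to the Hovey triple $( {}_{ Q,A }\!\Mod,\cE,\cE^{ \perp } )$: part (i) is the identification of trivial cofibrations as monomorphisms with cokernel in $\cE$; the backward direction of (ii) uses left properness, which does hold because every object is cofibrant; and the forward direction of (ii) uses the factorisation axiom, two-out-of-three, thickness of $\cE$, and the snake lemma. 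These steps all check out --- in particular the induced sequence $0 \to X' \to K \to C \to 0$ is exact, and $K$ lies in $\cE \cap \cE^{ \perp } = {}_{ Q,A }\!\Inj \subseteq \cE$, so thickness applies. What your approach buys is portability: it uses nothing about $Q$ beyond the existence of the Hovey triple with all objects cofibrant and $\cE$ thick, so it works verbatim in any such abelian model category; what the paper's approach buys is brevity and a direct link to the homological intuition behind $\cE$ and $\weq$. Your closing alternative via distinguished triangles is fine as a heuristic, but it silently needs the facts that short exact sequences yield triangles in $\cD_Q( A )$ and that the weak equivalences are exactly the morphisms inverted in $\cD_Q( A )$ (\cite[thm.\ 1.2.10(iv)]{Hovey_Book}), so it is best left as the sanity check you intend it to be.
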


\begin{proof}
For each $q$ in $Q_0$ the homology and cohomology functors of \cite[def.\ 7.11]{HJ-JLMS} give long exact sequences
\[
  \cdots
  \xrightarrow{}
  \BH^{ [q] }_{ i+1 }( X'' )
  \xrightarrow{}
  \BH^{ [q] }_i( X' )
  \xrightarrow{ \xi'_* }  
  \BH^{ [q] }_i( X )
  \xrightarrow{ \xi_* }  
  \BH^{ [q] }_i( X'' )
  \xrightarrow{}  
  \BH^{ [q] }_{ i-1 }( X' )
  \xrightarrow{}
  \cdots
\]
and
\[
  \cdots
  \xrightarrow{}
  \BH_{ [q] }^{ i-1 }( X'' )
  \xrightarrow{}
  \BH_{ [q] }^i( X' )
  \xrightarrow{ \xi'_* }  
  \BH_{ [q] }^i( X )
  \xrightarrow{ \xi_* }  
  \BH_{ [q] }^i( X'' )
  \xrightarrow{}  
  \BH_{ [q] }^{ i+1 }( X' )
  \xrightarrow{}
  \cdots.
\]
Combining these with \cite[thms.\ 7.1 and 7.2]{HJ-JLMS} proves the lemma.
\end{proof}

The following lemma and later parts of the paper use the notions of (special) preenvelopes, left minimal morphisms, and envelopes, see \cite[defs.\ 2.1.1 and 2.1.12]{Goebel-Trlifaj-book}.

\begin{Lemma}
\label{lem:9}
Let $E$ be in $\cE$.
\begin{enumerate}
\setlength\itemsep{4pt}

  \item  Each ${}_{ Q,A }\!\Inj$-preenvelope $E \xrightarrow{ e } J$ is a special $\cE^{ \perp }$-preenvelope.

  \item  Each ${}_{ Q,A }\!\Inj$-envelope $E \xrightarrow{ e } J$ is an $\cE^{ \perp }$-envelope.

\end{enumerate}
\end{Lemma}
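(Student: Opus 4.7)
The plan is to exploit the Hovey triple structure appearing in the proof of Proposition \ref{pro:weq_property}(i) to reduce both parts of the lemma to straightforward cotorsion pair bookkeeping. That proof cites a Hovey triple $({}_{Q,A}\!\Mod,\cE,\cE^{\perp})$ on ${}_{Q,A}\!\Mod$; in particular $(\cE,\cE^{\perp})$ is a complete cotorsion pair whose core $\cE \cap \cE^{\perp}$ equals ${}_{Q,A}\!\Inj$. Two standard consequences will be used throughout: every injective object of ${}_{Q,A}\!\Mod$ lies in $\cE$, and $\cE$ satisfies two-out-of-three in short exact sequences (the usual thickness of the ``trivial'' class of a Hovey triple).

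For part (i), I would first note that $e$ is a monomorphism: embed $E$ into any injective $I_0$ and factor the embedding through $e$ using the ${}_{Q,A}\!\Inj$-preenvelope property. Setting $C = \Coker(e)$ gives a short exact sequence $0 \to E \xrightarrow{e} J \to C \to 0$ in which $E \in \cE$ by hypothesis and $J \in \cE$ because $J$ is injective. Thickness of $\cE$ then forces $C \in \cE$. Since also $J \in {}_{Q,A}\!\Inj \subseteq \cE^{\perp}$, this short exact sequence presents $e$ as a special $\cE^{\perp}$-preenvelope of $E$ in the sense of the cotorsion pair $(\cE,\cE^{\perp})$.

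For part (ii), two properties must be verified. First, that $e$ is an $\cE^{\perp}$-preenvelope: given any morphism $E \to I'$ with $I' \in \cE^{\perp}$, apply $\Hom_{Q,A}(-,I')$ to the short exact sequence of part (i). Since $C \in \cE$ and $I' \in \cE^{\perp}$, the group $\Ext^1_{Q,A}(C,I')$ vanishes, so the restriction $\Hom_{Q,A}(J,I') \to \Hom_{Q,A}(E,I')$ is surjective and the given morphism factors through $e$. Second, that $e$ is left minimal as a morphism into $\cE^{\perp}$: since left minimality only refers to endomorphisms of the codomain $J$, and $J \in {}_{Q,A}\!\Inj$, left minimality of $e$ as an ${}_{Q,A}\!\Inj$-envelope supplies the needed property verbatim.

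The only nontrivial inputs are the inclusion ${}_{Q,A}\!\Inj \subseteq \cE$ and the thickness of $\cE$, both of which are immediate from the Hovey triple cited in Proposition \ref{pro:weq_property}(i); once these are in hand the proof collapses to a single application of the $\Hom/\Ext$ long exact sequence. I therefore expect no substantive obstacle beyond pinning down the two Hovey triple facts precisely.
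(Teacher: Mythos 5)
Your proof is correct and follows essentially the same route as the paper's: both establish that $e$ is a monomorphism, show the cokernel lies in $\cE = {}^{\perp}( \cE^{ \perp } )$ via the two-out-of-three property of $\cE$ in short exact sequences, deduce the preenvelope property from the resulting $\Ext^1$ vanishing, and obtain part (ii) by observing that left minimality is intrinsic to the morphism $e$. The only difference is cosmetic: you source the facts ${}_{ Q,A }\!\Inj \subseteq \cE$ and the thickness of $\cE$ from general Hovey-triple considerations, where the paper cites \cite[thm.\ 4.4]{HJ-JLMS} directly.
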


\begin{proof}
(i)  Since $e$ is a ${}_{ Q,A }\!\Inj$-preenvelope in the abelian Grothendieck category ${}_{ Q,A }\!\Mod$, it is a monomorphism; see \cite[cor.\ X.4.3]{Stenstroem}.  It defines a short exact sequence $0 \xrightarrow{} E \xrightarrow{ e } J \xrightarrow{} E' \xrightarrow{} 0$, which induces an exact sequence
\[
  \Hom_{ Q,A }( J,I )
  \xrightarrow{ e^* }
  \Hom_{ Q,A }( E,I )
  \xrightarrow{}
  \Ext^1_{ Q,A }( E',I )
\]
for each $I$.  Since $J$ is in $\cE$ by \cite[thm.\ 4.4(b)]{HJ-JLMS} we have $E'$ in $\cE$ by the last part of \cite[thm.\ 4.4]{HJ-JLMS}.  If $I$ is in $\cE^{ \perp }$ then $\Ext^1_{ Q,A }( E',I ) = 0$ whence $e^*$ is an epimorphism.  We also know that $J$ is in $\cE^{ \perp }$ by \cite[thm.\ 4.4(b)]{HJ-JLMS}, so $e$ is an $\cE^{ \perp }$-preenvelope.  It is special because $E'$ is in $\cE$ which is equal to ${}^{ \perp }( \cE^{ \perp } )$ by \cite[thm.\ 4.4(b)]{HJ-JLMS}.

(ii)  Since $e$ is a ${}_{ Q,A }\!\Inj$-envelope, it is a (special) $\cE^{ \perp }$-preenvelope by part (i).  Since it is an envelope, it is a left minimal morphism.  Hence it is an $\cE^{ \perp }$-envelope.
\end{proof}

\begin{Remark}
\label{rmk:EFG}
We recall the adjoint pairs of functors
\begingroup
\[
\xymatrix
{
  {}_{ Q,A }\!\Mod
    \ar[rr]^{ E_q } &&
  \Mod( A )
    \ar@/_2.0pc/[ll]_{ F_q }
    \ar@/^2.0pc/[ll]^{ G_q }
}
\;\;\;\;\mbox{given by}\;\;\;\;
\arraycolsep=1.4pt\def\arraystretch{1.5}
\begin{array}{rcl}
  F_q( M ) & = & Q( q,- ) \Tensor{\Bk} M, \\
  E_q( X ) & = & X( q ), \\
  G_q( M ) & = & \Hom_{ \Bk }\!\big( Q( -,q ),M \big), \phantom{\rule[-12pt]{1pt}{1pt}} \\
\end{array}
\]
\endgroup
which exist for each $q$ in $Q_0$, see \cite[cor.\ 3.9]{HJ-JLMS}.

The functor $E_q$ generalises the functor sending a chain complex to its $q$'th component.  The functors $F_q$ and $G_q$ generalise the indecomposable projective and injective representations of $Q$ at $q$ known from the context of quiver representations; see \cite[def.\ 5.3]{Schiffler}.
\end{Remark}

\begin{Lemma}
\label{lem:18}
Let $\{ F_qM_q \xrightarrow{ \varphi_q } X \}_{ q \in Q_0 }$ be a family of monomorphisms in ${}_{ Q,A }\!\Mod$.  Then the induced morphism $\coprod_{ p \in Q_0 } F_pM_p \xrightarrow{ \varphi } X$ is a monomorphism.
\end{Lemma}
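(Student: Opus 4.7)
The plan is to verify the monomorphism property pointwise and then run an induction along the radical filtration of $\coprod_p F_p M_p$ furnished by the pseudoradical $\fr$. Since ${}_{Q,A}\!\Mod$ is Grothendieck abelian with pointwise kernels, it suffices to fix $r \in Q_0$ and prove that the $A$-module map
\[
  \varphi(r)\colon \bigoplus_{p \in Q_0} Q(p,r) \otimes_{\Bk} M_p \longrightarrow X(r), \qquad \alpha \otimes m \longmapsto X(\alpha)\psi_p(m),
\]
is injective, where $\psi_p \colon M_p \to X(p)$ is the adjoint of $\varphi_p$ (and is automatically injective, being the restriction of $\varphi_p(p)$ to the $\Bk\id_p \otimes M_p$ summand provided by Strong retraction). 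Local boundedness guarantees that only finitely many summands are non-zero at any given $r$.

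Fix $N \geq 1$ with $\fr^N = 0$ (Nilpotence), and define $L^{(k)}(r) := \bigoplus_p \fr^k(p,r) \otimes_{\Bk} M_p$ with the convention $\fr^0 = Q$. Because $\fr$ is a two-sided ideal of $Q$, these assemble into a decreasing chain of subobjects $L := \coprod_p F_p M_p = L^{(0)} \supseteq L^{(1)} \supseteq \cdots \supseteq L^{(N)} = 0$. The key claim, proved by induction on $k$, is $\Ker \varphi(r) \subseteq L^{(k)}(r)$; the base case $k=0$ is trivial, and $k=N$ forces $\Ker \varphi(r) = 0$ as required. For the inductive step, take $u = \sum_p u_p$ with $u_p \in \fr^k(p,r) \otimes_\Bk M_p$ (by the inductive hypothesis) and $\varphi(r)(u) = 0$, and aim to show $u_p \in \fr^{k+1}(p,r) \otimes_\Bk M_p$ for each $p$. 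Naturality of $\varphi$ gives, for every $\beta \in Q(r,s)$, a kernel relation $\sum_p \varphi_p(s)(\beta \cdot u_p) = 0$ in $X(s)$, where $\beta$ acts on $F_p M_p$ by left multiplication $\alpha \otimes m \mapsto \beta\alpha \otimes m$. Choosing $\beta \in \fr^{N-k-1}(r,s)$ sends every summand of depth ${\geq} k+1$ into $\fr^N = 0$ and compresses the depth-$k$ summands into $\fr^{N-1}(p,s) \otimes M_p$; letting $\beta$ and $s$ vary, and using the Strong retraction splittings $Q(p,p) = \Bk\id_p \oplus \fr(p,p)$ together with $Q(q,p) = \fr(q,p)$ for $q \neq p$, extracts a well-defined leading class $\overline{u_p} \in \fr^k(p,r)/\fr^{k+1}(p,r) \otimes_\Bk M_p$ for each $p$, and the monomorphism hypothesis on each individual $\varphi_p$ then forces $\overline{u_p} = 0$.

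The main obstacle is precisely this leading-class isolation: after applying $\beta \in \fr^{N-k-1}$, the remaining relation $\sum_p \varphi_p(s)(\beta \cdot u_p) = 0$ still couples the various $p$'s at depth $\fr^{N-1}$, and pinning down each $\overline{u_p}$ individually relies on combining the Strong retraction splitting (to single out the identity component of each endomorphism ring) with the hereditary noetherian hypothesis on $\Bk$ (which controls the $\Bk$-structure of the $\fr^k/\fr^{k+1}$-layers for possibly non-projective $M_p$) and the individual injectivity of each $\varphi_p$. The chain-complex case is the clean archetype: from $\psi_r(m_0) + d\psi_{r+1}(m_1) = 0$ one applies the differential $d$ and uses $d^2 = 0$ to deduce $d\psi_r(m_0) = 0$, and then the mono hypothesis on $\varphi_r$ and $\varphi_{r+1}$ forces $m_0 = 0$ and $m_1 = 0$; the general plan is the higher-order analogue of this, controlled by Nilpotence.
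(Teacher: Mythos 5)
Your overall strategy (pointwise verification plus a descending induction along the radical filtration) is reasonable in spirit, but the proposal has a genuine gap exactly at the point you yourself flag as ``the main obstacle'', and the gap is not closed. After multiplying by $\beta \in \fr^{N-k-1}(r,s)$ you are left with a single relation $\sum_p \varphi_p(s)(\beta\cdot u_p) = 0$ in $X(s)$ in which \emph{several different} $p$ can contribute nonzero terms $\beta\cdot u_p \in \fr^{N-1}(p,s)\otimes_{\Bk}M_p$. Knowing that each $\varphi_p(s)$ is individually injective says nothing about such a sum, because the images of the various $\varphi_p(s)$ inside $X(s)$ may intersect nontrivially --- indeed, the independence of these images is essentially what the lemma is asserting, so extracting each $\overline{u_p}$ separately from the coupled relation is close to assuming the conclusion. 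Your chain-complex archetype is misleading here: for (ordinary or $N$-)complexes, at a fixed object $r$ and fixed radical depth $k$ only \emph{one} index $p$ has $\fr^k(p,r)/\fr^{k+1}(p,r)\neq 0$, so the relation automatically has a single surviving term and no decoupling is ever needed; for a general $Q$ satisfying the standing hypotheses this fails. There is also a secondary problem: the maps $\fr^{k+1}(p,r)\otimes_{\Bk}M_p \to \fr^k(p,r)\otimes_{\Bk}M_p$ need not be injective (the layers $\fr^k/\fr^{k+1}$ need not be $\Bk$-flat even over a hereditary noetherian $\Bk$), so your $L^{(k)}$ are not obviously a chain of subobjects and the ``leading class'' in $(\fr^k/\fr^{k+1})\otimes M_p$ is not well defined without further argument.

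What the paper does instead is supply precisely the missing decoupling device: it applies the left exact right adjoint functor $K_{Sq}$ of \cite[prop.\ 7.15]{HJ-JLMS} to the biproduct decomposition $\coprod_p F_pM_p \cong F_qM_q \oplus \coprod_{p'\neq q}F_{p'}M_{p'}$. Since $F_{p'}M_{p'}\cong G_{Sp'}M_{p'}$ and $K_{Sq}G_{Sp'} = 0$ for $p'\neq q$, the functor kills every summand except the $q$-th, turning the coupled map $\varphi$ into (an isomorphic copy of) the single monomorphism $K_{Sq}(\varphi_q)$; left exactness then gives $K_{Sq}(\Ker\varphi)=0$, and the detection principle \cite[prop.\ 7.19]{HJ-JLMS} ($K_r(Y)=0$ for all $r$ implies $Y=0$) finishes the proof. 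If you want to salvage an element-level argument you would need to build an analogue of this index-by-index (not merely depth-by-depth) annihilation, which your $\beta$-multiplication does not provide.
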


\begin{proof}
By definition, $\varphi$ is the unique morphism such that the following diagram is commutative for each $q$ in $Q_0$,
\[
\vcenter{
  \xymatrix @+0.5pc {
    F_qM_q \ar_{ \iota_q }[d] \ar^{ \varphi_q }[dr] \\
    \coprod_{ p \in Q_0 } F_pM_p \ar_-{ \varphi }[r] & X \lefteqn{,} \\
                    }
        }
\]
where $\iota_q$ denotes the coproduct inclusion.  The diagram can be extended as follows.
\[
\vcenter{
  \xymatrix @+0.5pc {
    & & F_qM_q \ar@<-1ex>_{ \iota_q }[d] \ar^{ \varphi_q }[dr] \\
    0 \ar[r] & \Ker \varphi \ar[r] & \coprod_{ p \in Q_0 } F_pM_p \ar_-{ \varphi }[r] \ar@<-1ex>[u] \ar@<1ex>[d] & X \\
    & & \coprod_{ p' \in Q_0 \setminus q } F_{ p' }M_{ p' } \ar@<1ex>[u]
                    }
        }
\]
Here the column is a biproduct diagram, $\varphi_q$ is a monomorphism by assumption, and the row is left exact.  Recalling that $S$ denotes the Serre functor of $Q$, we have the object $Sq$ in $Q_0$.  Applying the functor $K_{ Sq }$ of \cite[prop.\ 7.15]{HJ-JLMS} to the diagram gives the following.
\[
\vcenter{
  \xymatrix @+0.5pc {
    & & K_{ Sq }( F_qM_q ) \ar@<-1ex>_{ K_{ Sq }( \iota_q ) }[d] \ar^{ K_{ Sq }( \varphi_q ) }[dr] \\
    0 \ar[r] & K_{ Sq }( \Ker \varphi ) \ar[r] & K_{ Sq } \big( \coprod_{ p \in Q_0 } F_pM_p \big) \ar_-{ K_{ Sq }( \varphi ) }[r] \ar@<-1ex>[u] \ar@<1ex>[d] & K_{ Sq }( X ) \\
    & & K_{ Sq } \big( \coprod_{ p' \in Q_0 \setminus q } F_{ p' }M_{ p' } \big)\ar@<1ex>[u]
                    }
        }
\]
The functor $K_{ Sq }$ is additive, so the column is a biproduct diagram.  The functor $K_{ Sq }$ is a right adjoint by \cite[prop.\ 7.15]{HJ-JLMS}, hence left exact, so $K_{ Sq }( \varphi_q )$ is a monomorphism and the row is left exact.  

In the last diagram, the bottom object is
\[
  K_{ Sq } \big( \coprod_{ p' \in Q_0 \setminus q } F_{ p' }M_{ p' } \big)
  \cong K_{ Sq } \big( \prod_{ p' \in Q_0 \setminus q } F_{ p' }M_{ p' } \big)
  \cong \prod_{ p' \in Q_0 \setminus q } K_{ Sq }F_{ p' }M_{ p' } \\
  \cong \prod_{ p' \in Q_0 \setminus q } K_{ Sq }G_{ Sp' }M_{ p' } \\
  \cong 0,
\]
where the first isomorphism holds by \cite[prop.\ 3.7]{HJ-TAMS}, the second isomorphism holds since $K_{ Sq }$ is a right adjoint functor by \cite[prop.\ 7.15]{HJ-JLMS}, and the third and fourth isomorphisms hold by \cite[lem.\ 3.4]{HJ-TAMS} and \cite[lem.\ 7.28(b)]{HJ-JLMS}.  Hence $K_{ Sq }( \iota_q )$ is an isomorphism since the column is a biproduct diagram.  But $K_{ Sq }( \varphi_q )$ is a monomorphism so the commutative triangle in the diagram implies that $K_{ Sq }( \varphi )$ is a monomorphism, whence left exactness of the row implies $K_{ Sq }( \Ker \varphi ) = 0$.  Since this holds for each $q \in Q_0$, we get $\Ker \varphi = 0$ by \cite[prop.\ 7.19]{HJ-JLMS} so $\varphi$ is a monomorphism as desired.
\end{proof}

\begin{Lemma}
\label{lem:19}
Let $M \xrightarrow{ m } N$ be an essential extension in $\Mod( A )$ (that is, a monomorphism with essential image).  Then $F_qM \xrightarrow{ F_qm } F_qN$ is an essential extension in ${}_{ Q,A }\!\Mod$ for each $q$ in $Q_0$.
\end{Lemma}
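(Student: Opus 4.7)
The plan is to verify essentiality of $F_qm$ pointwise at each $p \in Q_0$, reducing it to a statement about essential extensions in $\Mod(A)$. The setup hypotheses force each $\Bk$-module $Q(q,p)$ to be finitely generated projective: Hom-finiteness gives finite generation, the Serre duality $Q(q,p) \cong \Hom_{ \Bk }\!\big( Q(p,Sq),\Bk \big)$ presents $Q(q,p)$ as a $\Bk$-dual, and $\Bk$ being hereditary noetherian commutative (equivalently, a finite product of Dedekind rings) means that such a dual annihilates the torsion part of $Q(p,Sq)$ and leaves a finitely generated projective module. In particular $Q(q,p)$ is $\Bk$-flat, so $F_qm(p) = \mathrm{id}_{ Q(q,p) } \Tensor{\Bk} m$ is a monomorphism for each $p$, whence $F_qm$ is a monomorphism in ${}_{ Q,A }\!\Mod$.

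To prove essentiality, let $Y \subseteq F_qN$ be a nonzero subobject in ${}_{ Q,A }\!\Mod$ and pick $p \in Q_0$ with $Y(p) \neq 0$. By projectivity, choose $Q'$ with $Q(q,p) \oplus Q' \cong \Bk^n$. Tensoring over $\Bk$ with $M$ and $N$ gives $A$-module direct sum decompositions
\[
  N^n \cong F_qN(p) \oplus ( Q' \Tensor{\Bk} N )
  \quad\text{and}\quad
  M^n \cong F_qM(p) \oplus ( Q' \Tensor{\Bk} M ),
\]
compatible with the inclusions $F_qM(p) \hookrightarrow F_qN(p)$ and $M^n \hookrightarrow N^n$. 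Because finite direct sums of essential extensions in $\Mod( A )$ are essential (standard, as $E( M^n ) = E( M )^n$), the inclusion $M^n \hookrightarrow N^n$ is essential. In particular $Y(p) \cap M^n \neq 0$, since $Y(p)$ is a nonzero $A$-submodule of $F_qN(p) \subseteq N^n$.

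Finally, a short disjointness argument identifies $F_qN(p) \cap M^n$ with $F_qM(p)$: any element lying in both summands decomposes as $y + z$ with $y \in F_qM(p)$ and $z \in Q' \Tensor{\Bk} M \subseteq Q' \Tensor{\Bk} N$, and the difference $z = x - y$ must then lie in $F_qN(p) \cap ( Q' \Tensor{\Bk} N ) = 0$ by the disjointness of the direct summands. Hence $( Y \cap F_qM )( p ) = Y(p) \cap F_qM(p) = Y(p) \cap M^n$ is nonzero, yielding $Y \cap F_qM \neq 0$ as required.

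The main obstacle is the initial fact that each $Q(q,p)$ is finitely generated projective over $\Bk$. Once this is secured, the remainder is a direct manipulation of tensor products with direct sum decompositions together with the standard behaviour of essential extensions under finite direct sums.
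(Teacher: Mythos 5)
Your proof is correct, but it takes a genuinely different route from the paper's. The paper first rewrites $F_q \cong G_{ Sq }$ (via \cite[lem.\ 3.4]{HJ-TAMS}), takes the pullback of a subobject $X \subseteq G_pN$ along $G_pm$, and applies the left exact evaluation-type functors $K_r$ of \cite[prop.\ 7.15]{HJ-JLMS}: the computations $K_rG_p = 0$ for $r \neq p$ and $K_pG_p \cong \id$ collapse the pullback to one over the single essential extension $M \subseteq N$, and \cite[prop.\ 7.19]{HJ-JLMS} (the $K_r$ jointly detect zero) finishes. You instead argue pointwise: since $Q( q,p )$ is finitely generated projective over $\Bk$ (your justification is sound, and even simpler than the Dedekind decomposition --- a finitely generated module over a noetherian hereditary commutative ring has a finitely generated dual embedding in some $\Bk^n$, hence projective; this projectivity is in any case the reason $F_q$ is exact in \cite[cor.\ 3.9]{HJ-JLMS}), the component $F_qN( p )$ is a direct summand of $N^n$ compatibly with $F_qM( p ) \subseteq M^n$, and the standard fact that finite direct sums of essential extensions are essential, together with your direct-summand disjointness computation $F_qN( p ) \cap M^n = F_qM( p )$, gives $Y( p ) \cap F_qM( p ) \neq 0$ (intersections of subobjects in the functor category being computed pointwise). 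The paper's argument is shorter given its established machinery and stays coordinate-free; yours is elementary and essentially self-contained, at the cost of unpacking $F_q$ explicitly and re-deriving the projectivity of the hom-modules.
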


\begin{proof}
The functor $F_q$ is exact by \cite[cor.\ 3.9]{HJ-JLMS} so $F_qM \xrightarrow{ F_qm } F_qN$ is a monomorphism.  Up to isomorphism, it can be written $G_pM \xrightarrow{ G_pm } G_pN$ by \cite[lem.\ 3.4]{HJ-TAMS} where $p = Sq$.  We must prove that if $X \subseteq G_pN$ has zero intersection with the image of $G_pm$, then $X$ is zero.  So let $X \xrightarrow{ \xi } G_pN$ denote the inclusion and assume that
\[
\vcenter{
  \xymatrix @+0.5pc {
    0 \ar[r] \ar[d] & X \ar^{ \xi }[d] \\
    G_pM \ar_{ G_pm }[r] & G_pN \\
                    }
        }
\]
is a pullback diagram; we must prove that $X$ is zero.  

For $r$ in $Q_0$, the functor $K_r$ of \cite[prop.\ 7.15]{HJ-JLMS} is a right adjoint, hence left exact.  It follows that there is a pullback diagram
\[
\vcenter{
  \xymatrix @+1.5pc {
    0 \ar[r] \ar[d] & K_r( X ) \ar^{ K_r( \xi ) }[d] \\
    K_r( G_pM ) \ar_{ K_r( G_pm ) }[r] & K_r( G_pN ) \lefteqn{.} \\
                    }
        }
\]
If $r \neq p$ then $K_rG_p = 0$ by \cite[lem.\ 7.28(b)]{HJ-JLMS}; in particular, $K_r( G_pN ) = 0$ whence the diagram implies $K_r( X ) = 0$.  If $r = p$ then $K_rG_p \cong \id$ by \cite[lem.\ 7.28(b)]{HJ-JLMS}; in particular, the diagram is isomorphic to a pullback diagram
\[
\vcenter{
  \xymatrix @+0.5pc {
    0 \ar[r] \ar[d] & K_r( X ) \ar[d] \\
    M \ar_{ m }[r] & N \lefteqn{.} \\
                    }
        }
\]
Since $M \xrightarrow{ m } N$ is an essential extension, this implies $K_r( X ) = 0$. 

Hence $K_r( X ) = 0$ for each $r$ in $Q_0$, so $X = 0$ as desired by \cite[prop.\ 7.19]{HJ-JLMS}.
\end{proof}

\begin{Remark}
\label{rmk:adjunction}
We recall two properties of adjoint functors.
\begin{enumerate}
\setlength\itemsep{4pt}

  \item  The adjunction isomorphism $\Hom_A( M,E_qX ) \xrightarrow{} \Hom_{ Q,A }( F_qM,X )$ maps a morphism 
\[
M \xrightarrow{ \mu } E_qX
\]
to the {\em adjoint morphism}
\[
  F_qM \xrightarrow{ \varphi } X
\]  
defined as the composition of the morphisms
\begin{equation}
\label{equ:rmk:adjunction:a}
  F_qM \xrightarrow{ F_q\mu } F_qE_qX \xrightarrow{ \varepsilon_X } X,
\end{equation}
where $\varepsilon$ is the counit of the adjoint pair $( F_q,E_q )$.

  \item  If the diagram
\begin{equation}
\label{equ:lem:pre20_21:a:a}
\vcenter{
  \xymatrix @+0.5pc {
    M \ar^{ m }[r] \ar_{ \mu }[d] & N \ar^-{ \nu }[dl] \\
    E_qX \\
                    }
        }
\end{equation}
is commutative, then so is the diagram
\begin{equation}
\label{equ:lem:pre20_21:a:b}
\vcenter{
  \xymatrix @+0.5pc {
    F_qM \ar^{ F_qm }[r] \ar_{ \varphi }[d] & F_qN \ar^-{ \psi }[dl] \\
    X \\
                    }
        }
\end{equation}
where the adjoint morphisms of $\mu$ and $\nu$ are $\varphi$ and $\psi$.

\end{enumerate}
\end{Remark}

\begin{Lemma}
\label{lem:14}
Let $q$ in $Q_0$ and $X$ in ${}_{ Q,A }\!\Mod$ be given.  Consider a morphism $M \xrightarrow{ \mu } E_qX$ with adjoint morphism $F_qM \xrightarrow{ \varphi } X$.  If $\varphi$ is a monomorphism, then $\mu$ is a monomorphism.
\end{Lemma}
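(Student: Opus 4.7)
The plan is to use the description of the adjoint morphism in Remark \ref{rmk:adjunction}(i), which writes $\varphi$ as the composition $F_qM \xrightarrow{ F_q\mu } F_qE_qX \xrightarrow{ \varepsilon_X } X$. Since the hypothesis says $\varphi$ is a monomorphism and the first factor of a monic composition must itself be monic, this immediately forces $F_q\mu$ to be a monomorphism. Setting $K = \Ker \mu$ and recalling that $F_q$ is exact by \cite[cor.\ 3.9]{HJ-JLMS}, we therefore obtain $F_qK = \Ker( F_q\mu ) = 0$.

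It then remains to conclude $K = 0$. For this I would invoke the natural isomorphism $F_q \cong G_{ Sq }$ from \cite[lem.\ 3.4]{HJ-TAMS} (already used in the proof of Lemma \ref{lem:19}), together with the identity $K_rG_p \cong \id$ for $r = p$ from \cite[lem.\ 7.28(b)]{HJ-JLMS}. Applied with $r = p = Sq$ these combine to $K_{ Sq } F_q \cong \id$, so $K \cong K_{ Sq }( F_qK ) = K_{ Sq }( 0 ) = 0$, finishing the proof. I do not anticipate a substantive obstacle: the argument is essentially a brief unwinding of the adjunction formalism, combined with two already-recorded properties of $F_q$ and $K_{ Sq }$ that jointly say $F_q$ reflects the zero object.
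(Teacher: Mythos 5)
Your proof is correct and follows essentially the same route as the paper: factor $\varphi$ through $F_q\mu$, deduce that $F_q\mu$ is monic, use exactness of $F_q$ to get $F_q\Ker\mu = 0$, and then use the fact that $F_q$ reflects the zero object. The only (cosmetic) difference is in the last step, where the paper invokes $C_qF_q \cong \id$ from \cite[lem.\ 7.28(a)]{HJ-JLMS} directly, whereas you reach the same conclusion via $F_q \cong G_{Sq}$ and $K_{Sq}G_{Sq} \cong \id$.
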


\begin{proof}
Assume that $\varphi$ is a monomorphism.  Then $F_q\mu$ is a monomorphism since $\varphi$ is the composition of the morphisms in Equation \eqref{equ:rmk:adjunction:a}.  There is an exact sequence $0 \xrightarrow{} \Ker \mu \xrightarrow{} M \xrightarrow{ \mu } E_qX$, hence an exact sequence $0 \xrightarrow{} F_q\Ker \mu \xrightarrow{} F_qM \xrightarrow{ F_q\mu } F_qE_qX$ because $F_q$ is exact by \cite[cor.\ 3.9]{HJ-JLMS}.  Since $F_q\mu$ is a monomorphism, this shows $F_q\Ker \mu = 0$ whence $\Ker \mu \cong C_qF_q \Ker \mu \cong 0$ by \cite[lem.\ 7.28(a)]{HJ-JLMS}, where $C_q$ is the functor of \cite[prop.\ 7.15]{HJ-JLMS}.  So $\mu$ is a monomorphism.
\end{proof}

\begin{Lemma}
\label{lem:20_21}
Let $q$ in $Q_0$ and $X$ in ${}_{ Q,A }\!\Mod$ be given.  
\begin{enumerate}
\setlength\itemsep{4pt}

  \item  The following set of $A$-left submodules of $E_qX$ is non-empty and has a maximal element with respect to inclusion.
\[
  \cM = 
  \Bigg\{ M \subseteq E_qX
  \:\Bigg|\! 
  \begin{array}{l}
    \mbox{the inclusion morphism $M \xrightarrow{} E_qX$ has an adjoint} \\[1mm]
    \mbox{morphism $F_qM \xrightarrow{} X$ which is a monomorphism}
  \end{array}
  \Bigg\}
\]

  \item  Suppose that $E_qX$ is in $\Inj( A )$.  Then so is each maximal element of $\cM$.

\end{enumerate}
\end{Lemma}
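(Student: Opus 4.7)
The plan for (i) is to apply Zorn's lemma. Non-emptiness is clear since the zero submodule lies in $\cM$: the adjoint of $0 \hookrightarrow E_qX$ is $F_q 0 = 0 \to X$, trivially a monomorphism. For a chain $\{ M_\alpha \} \subseteq \cM$, set $M = \bigcup_\alpha M_\alpha \subseteq E_qX$, which is the filtered colimit of the $M_\alpha$ in $\Mod( A )$. Since $F_q$ is a left adjoint (Remark \ref{rmk:EFG}), $F_qM \cong \colim F_qM_\alpha$; moreover $F_q$ is exact by \cite[cor.\ 3.9]{HJ-JLMS}, so each transition $F_qM_\alpha \hookrightarrow F_qM_\beta$ remains a monomorphism. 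By naturality of the adjunction, the adjoint $\varphi \colon F_qM \to X$ of the inclusion $M \hookrightarrow E_qX$ is the unique morphism out of the colimit whose restriction to each $F_qM_\alpha$ is the given monomorphism $\varphi_\alpha \colon F_qM_\alpha \to X$. Because ${}_{Q,A}\!\Mod$ is a Grothendieck (AB5) category, this filtered colimit of monomorphisms is itself a monomorphism, so $M \in \cM$ bounds the chain.

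For (ii), assume $E_qX \in \Inj( A )$ and let $M$ be a maximal element of $\cM$. A standard Zorn argument produces a maximal essential extension $M \subseteq E( M ) \subseteq E_qX$, and $E( M )$ is injective: any essential extension of $E( M )$ embeds into the injective module $E_qX$ and hence coincides with $E( M )$ by maximality. By Lemma \ref{lem:19}, the induced inclusion $F_qM \hookrightarrow F_qE( M )$ is an essential extension in ${}_{Q,A}\!\Mod$. Let $\psi \colon F_qE( M ) \to X$ be the adjoint of $E( M ) \hookrightarrow E_qX$. Remark \ref{rmk:adjunction}(ii), applied to the commutative triangle that factors $M \hookrightarrow E_qX$ through $E( M )$, shows that the composite $F_qM \hookrightarrow F_qE( M ) \xrightarrow{ \psi } X$ equals the adjoint $\varphi \colon F_qM \to X$, which is a monomorphism because $M \in \cM$. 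Consequently $\Ker \psi$ intersects the subobject $F_qM$ of $F_qE( M )$ in $\Ker \varphi = 0$, and essentiality of the extension $F_qM \hookrightarrow F_qE( M )$ forces $\Ker \psi = 0$. Hence $E( M ) \in \cM$, and maximality of $M$ yields $E( M ) = M$, so $M$ is injective.

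The main subtlety is the adjunction bookkeeping in (ii): the conclusion $E( M ) \in \cM$ rests on Remark \ref{rmk:adjunction}(ii) identifying the restriction of $\psi$ with the known monomorphism $\varphi$, after which the essentiality supplied by Lemma \ref{lem:19} does the remaining work. Part (i) is then routine, with the AB5 property of ${}_{Q,A}\!\Mod$ ensuring that the filtered union of submodules in $\cM$ is again in $\cM$.
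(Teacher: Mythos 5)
Your proof is correct and follows essentially the same route as the paper: Zorn's lemma with AB5 and the colimit-preservation of the left adjoint $F_q$ for part (i), and Lemma \ref{lem:19} together with Remark \ref{rmk:adjunction}(ii) for part (ii). The only (cosmetic) difference is in (ii), where you build the injective hull of $M$ inside $E_qX$ and show it lies in $\cM$, whereas the paper takes an arbitrary essential extension $M \subseteq N \subseteq E_qX$ and derives a contradiction from the maximality of $M$; both hinge on the same essentiality argument.
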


\begin{proof}
(i)  The set $\cM$ is non-empty because it contains $M = 0$.  We will use Zorn's Lemma to prove that $\cM$ has a maximal element, so suppose that a totally ordered subset $\cI$ of $\cM$ is given; we must prove that $\cI$ has an upper bound in $\cM$.

There is a small filtered category $I$ whose objects are the modules in $\cI$ and whose morphisms are the inclusions between modules in $\cI$.  There is a functor $I \xrightarrow{ M } \Mod( A )$ acting as the identity on objects and morphisms, and the colimit of $M$ is
\[
  C = \bigcup_{ i \in \cI } M( i ).
\]
We will prove that $C$ is in $\cM$ whence it is clearly an upper bound for $\cI$ in $\cM$.  That is, we will prove that the inclusion morphism $C \xrightarrow{ \gamma } E_qX$ has an adjoint morphism $F_qC \xrightarrow{ \varphi } X$ which is a monomorphism.

For each morphism $i \xrightarrow{ \alpha } j$ in $I$ there is a commutative diagram
\[
\vcenter{
  \xymatrix @+0.5pc {
    & & C \ar^{ \gamma }[dd] \\
    M( i ) \ar^{ M( \alpha ) }[r] \ar@/^1.3pc/^{ \iota_i }[urr] \ar@/^-1.3pc/_{ \mu_i }[drr] & M( j ) \ar@/^0.4pc/^<<<<<{ \iota_j }[ur] \ar@/_0.4pc/_-{ \mu_j }[dr] \\
    & & E_qX
                    }
        }
\]
where all arrows are inclusions.  The universal cone to $C$ is $\{ M( i ) \xrightarrow{ \iota_i } C \}_{ i \in \cI }$, and the cone $\{ M( i ) \xrightarrow{ \mu_i } E_qX \}_{ i \in \cI }$ induces the inclusion morphism $C \xrightarrow{ \gamma } E_qX$.  Remark \ref{rmk:adjunction}(ii) gives an induced commutative diagram
\[
\vcenter{
  \xymatrix @+1.5pc {
    & & F_qC \ar^{ \varphi }[dd] \\
    F_q \big( M( i ) \big) \ar^{ F_q ( M( \alpha ) ) }[r] \ar@/^1.5pc/^{ F_q( \iota_i ) }[urr] \ar@/^-1.5pc/_{ \varphi_i }[drr] & F_q \big( M( j ) \big) \ar@/^0.5pc/^{ F_q( \iota_j ) }[ur] \ar@/_0.5pc/_{ \varphi_j }[dr] \\
    & & X
                    }
        }
\]
where $\varphi_i$, $\varphi_j$, $\varphi$ are the adjoint morphisms of $\mu_i$, $\mu_j$, $\gamma$.  The functor $F_q$ is a left adjoint hence preserves colimits, so $\{ F_q \big( M( i ) \big) \xrightarrow{ F_q( \iota_i ) } F_qC \}_{ i \in \cI }$ is the universal cone to the colimit of $F_q \circ M$.  The last diagram shows that $\{ F_q \big( M( i ) \big) \xrightarrow{ \varphi_i } X \}_{ i \in \cI }$ is a cone inducing the adjoint morphism $F_qC \xrightarrow{ \varphi } X$.  Since the $M( i )$ are in $\cM$, the $\varphi_i$ are monomorphisms.  Since ${}_{ Q,A }\!\Mod$ is a Grothendieck abelian category, filtered colimits preserve monomorphisms, so $\varphi$ is a monomorphism as desired.

(ii)  Let $M \subseteq E_qX$ be a maximal element of $\cM$.  Since $E_qX$ is in $\Inj( A )$, to prove that $M$ is in $\Inj( A )$ we will assume $M \subseteq N \subseteq E_qX$ with $M$ essential in $N$ and prove $M = N$; this is sufficient by \cite[lem.\ V.2.2 and prop.\ V.2.4]{Stenstroem}.

Let $M \xrightarrow{ m } N$ be the inclusion and consider Remark \ref{rmk:adjunction}(ii).  There is a commutative diagram \eqref{equ:lem:pre20_21:a:a} where $\mu$ and $\nu$ are the inclusions into $E_qX$, and the remark gives the commutative diagram \eqref{equ:lem:pre20_21:a:b} where $\varphi$ and $\psi$ are the adjoint morphisms of $\mu$ and $\nu$.  Assume $M \subsetneq N$.  Since $M$ is maximal in $\cM$, the morphism $\varphi$ is a monomorphism but the morphism $\psi$ is not.  But then Diagram \eqref{equ:lem:pre20_21:a:b} contradicts that $F_qM \xrightarrow{ F_qm } F_qN$ is an essential extension by Lemma \ref{lem:19}.
\end{proof}

\section{Main theorems}
\label{sec:main}

This section provides different characterisations of minimal semiinjective objects in Theorem \ref{thm:15_23_25} and uses them to prove Theorems \ref{thm:22}, \ref{thm:27}, \ref{thm:36}, which were stated in the introduction.  Not all parts of Theorem \ref{thm:15_23_25} are required for the subsequent proofs, but we consider them worthwhile in their own right.  The adjoint functors $E_q$ and $F_q$ in parts (v)--(vii) were defined in Remark \ref{rmk:EFG}.

\begin{Theorem}
\label{thm:15_23_25}
Let $I$ be a semiinjective object in ${}_{ Q,A }\!\Mod$.  The following conditions are equivalent.
\begin{enumerate}
\setlength\itemsep{4pt}

  \item  $I$ is minimal in the sense of Definition \ref{def:A}(ii), that is, if $J \subseteq I$ with $J$ in ${}_{ Q,A }\!\Inj$, then $J = 0$.
  
  \item  If $E \subseteq I$ with $E$ in $\cE$, then $E = 0$.

  \item  Each weak equivalence $I \xrightarrow{} X$ in ${}_{ Q,A }\!\Mod$ is a split monomorphism.

  \item  If an endomorphism $I \xrightarrow{ f } I$ in ${}_{ Q,A }\!\Mod$ induces an automorphism in $\cD_Q( A )$, then $f$ is already an automorphism.

  \item  For $q$ in $Q_0$ and $D$ in $\Inj( A )$, if a monomorphism $D \xrightarrow{} E_qI$ satisfies that the adjoint morphism $F_qD \xrightarrow{} I$ is a monomorphism, then $D = 0$.
  
  \item  For $q$ in $Q_0$ and $M$ in $\Mod( A )$, if a monomorphism $M \xrightarrow{} E_qI$ satisfies that the adjoint morphism $F_qM \xrightarrow{} I$ is a monomorphism, then $M = 0$.
  
  \item  For $q$ in $Q_0$ and $M$ in $\Mod( A )$, if a monomorphism $M \xrightarrow{ \mu } E_qI$ satisfies 
\[
  \Image( F_qM \xrightarrow{ F_q\mu } F_qE_q I ) \cap Z_qI = 0,
\]
  then $M = 0$.  Here we write
\[
  Z_qX = \Ker( F_qE_qX \xrightarrow{ \varepsilon_X } X )
\]
for $X$ in ${}_{ Q,A }\!\Mod$, where $\varepsilon$ is the counit of the adjoint pair $( F_q,E_q )$.

\end{enumerate}
\end{Theorem}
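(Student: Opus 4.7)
The plan is to establish the equivalences by running two cycles and linking them.  On the ``global'' side I will prove (i) $\Leftrightarrow$ (ii) $\Rightarrow$ (iv) $\Rightarrow$ (iii) $\Rightarrow$ (ii); on the ``local'' side, (i) $\Rightarrow$ (v) $\Rightarrow$ (vi) $\Leftrightarrow$ (vii) $\Rightarrow$ (i).

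The equivalence (i) $\Leftrightarrow$ (ii) comes from Lemma \ref{lem:9}: an ${}_{Q,A}\!\Inj$-envelope $E \to J$ of an exact subobject $E \subseteq I$ is a special $\cE^{\perp}$-preenvelope, so the inclusion $E \hookrightarrow I$ factors through a monic $J \hookrightarrow I$ by essentiality, giving an injective subobject of $I$.  For (ii) $\Rightarrow$ (iv), given $f \in \End I$ inducing an automorphism in $\cD_Q(A)$, Proposition \ref{pro:weq_property}(i) produces $g$ with $gf \sim \id_I$; writing $h = gf = \id_I + \beta\alpha$ with $\alpha : I \to J$, $\beta : J \to I$, $J \in {}_{Q,A}\!\Inj$, the restriction $\alpha|_{\Ker h}$ is monic with $(-\beta)\alpha|_{\Ker h} = \id_{\Ker h}$, so the injective envelope $E \subseteq J$ of $\alpha(\Ker h)$ embeds in $I$ via $-\beta|_E$ (monic by essentiality), whence $E = 0$ and $\Ker h = 0$ by (i).  Then $h$ is a monic weak equivalence, so $\Coker h \in \cE$ by Proposition \ref{pro:10}(i), and the short exact sequence $0 \to I \xrightarrow{h} I \to \Coker h \to 0$ splits since $\Ext^1_{Q,A}(\Coker h, I) = 0$, presenting $\Coker h$ as an exact direct summand of $I$ which vanishes by (ii); so $h$ is an isomorphism, as is $fg$ by symmetry, whence $f$ is an isomorphism.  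Next, (iv) $\Rightarrow$ (iii): a weak equivalence $\iota : I \to X$ admits $\rho : X \to I$ with $\rho\iota \sim \id_I$ by Proposition \ref{pro:weq_property}(iii), and $\rho\iota$ is an automorphism by (iv), so $(\rho\iota)^{-1}\rho$ retracts $\iota$.  Finally, (iii) $\Rightarrow$ (ii): for $E \subseteq I$ in $\cE$, the quotient $I \twoheadrightarrow I/E$ is a weak equivalence by Proposition \ref{pro:10}(ii), hence a split monomorphism by (iii), hence (being epic) an isomorphism, so $E = 0$.

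For the local block, (vi) $\Leftrightarrow$ (vii) follows from Remark \ref{rmk:adjunction}(i): $\varphi = \varepsilon_I \circ F_q\mu$ and $F_q\mu$ is monic, so $\Ker\varphi \cong \Image(F_q\mu) \cap Z_qI$.  For (i) $\Rightarrow$ (v), $F_qD \cong G_{Sq}D$ (cf.\ the proof of Lemma \ref{lem:19}) is injective in ${}_{Q,A}\!\Mod$ when $D \in \Inj(A)$, so the monic $\varphi$ exhibits $F_qD$ as an injective subobject of $I$, forcing $F_qD = 0$ by (i) and hence $D \cong C_qF_qD = 0$ by \cite[lem.\ 7.28(a)]{HJ-JLMS}.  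For (v) $\Rightarrow$ (vi), I enlarge $M$ to its injective envelope $E(M)$ in $\Mod(A)$; by Lemma \ref{lem:19}, $F_qM \hookrightarrow F_qE(M)$ is essential, and $\varphi$ extends to a morphism $\psi : F_qE(M) \to I$ because $F_q(E(M)/M) \in \cE$ makes the Ext obstruction vanish, and essentiality then forces $\psi$ monic; Lemma \ref{lem:14} makes its adjoint $E(M) \to E_qI$ monic, so (v) applied with $D = E(M)$ forces $E(M) = 0$ and $M = 0$.  For (vi) $\Rightarrow$ (i), a nonzero injective subobject $J \subseteq I$ contains an indecomposable summand $G_p(N) \cong F_q(N)$ with $q = S^{-1}p$ and $N$ nonzero injective over $A$; the adjoint $N \to E_qI$ of the inclusion $F_q(N) \hookrightarrow I$ is monic by Lemma \ref{lem:14}, contradicting (vi) with $M = N$.

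The main obstacle I anticipate is the auxiliary fact $F_qP \in \cE$ for every $A$-module $P$, needed for the extension step of (v) $\Rightarrow$ (vi) and equivalent by adjunction to $E_qI \in \Inj(A)$ for every $I \in \cE^{\perp}$.  I plan to establish this as a preliminary lemma: $F_qA$ is projective in ${}_{Q,A}\!\Mod$ (being left adjoint to the exact functor $E_q$) hence lies in $\cE$ by \cite[thm.\ 4.4(b)]{HJ-JLMS}, and then presenting $P$ as a cokernel of free $A$-modules together with exactness of $F_q$ and the closure of $\cE$ under extensions and cokernels of monomorphisms (inherent in the cotorsion pair structure of $(\cE, \cE^{\perp})$) yields $F_qP \in \cE$.
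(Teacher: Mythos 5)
Your proposal is correct, and while several pieces coincide with the paper's proof (the envelope argument for (i)$\Rightarrow$(ii), the implications (iv)$\Rightarrow$(iii) and (iii)$\Rightarrow$(ii), the equivalence (vi)$\Leftrightarrow$(vii), and the decomposition $J \cong \coprod_p F_pD_p$ used for (vi)$\Rightarrow$(i)), you organise the cycles differently and this forces two genuinely different arguments. The paper runs (i)$\Rightarrow$(ii)$\Rightarrow$(vi)$\Rightarrow$(v)$\Rightarrow$(i), so it never proves (v)$\Rightarrow$(vi): it gets (vi) for free from (ii) because $F_qM$ lies in $\cE$ for every $M$. You prove (v)$\Rightarrow$(vi) directly by passing to the injective envelope $E( M )$ in $\Mod( A )$, extending $\varphi$ along $F_qM \hookrightarrow F_qE( M )$ using $\Ext^1_{ Q,A }\big( F_q( E( M )/M ),I \big) = 0$, and invoking Lemma \ref{lem:19} to keep the extension monic; this works, at the cost of an extra appeal to essentiality that the paper's ordering avoids. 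Second, in showing that a monomorphism $h$ inducing an automorphism is epic, the paper passes to the triangulated category $\cE^{ \perp }/{}_{ Q,A }\!\Inj$ and uses Happel's lemma to conclude $\Coker h$ is injective, whereas you observe that $\Coker h \in \cE$ together with $\Ext^1_{ Q,A }( \Coker h,I ) = 0$ splits the sequence, exhibiting $\Coker h$ as an exact direct summand of $I$ that dies by (ii); your version is more elementary. One caution: the ``main obstacle'' you flag, namely $F_qP \in \cE$ for all $P$, is a directly citable fact (\cite[lem.\ 7.14 and thm.\ 7.1]{HJ-JLMS}, which the paper uses for exactly this purpose), and your proposed derivation of it via cokernels would not quite go through as written, since a free presentation of $P$ need not yield a monomorphism after applying $F_q$ and $\cE$ is not closed under arbitrary quotients; this does not affect the proof, since you may simply cite the fact.
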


\begin{proof}
Before starting the proof proper, we recall from \cite[thm.\ 6.5]{HJ-JLMS} that $\cE^{ \perp }$ is a Frobenius category with projective-injective objects ${}_{ Q,A }\!\Inj$, and that there is an equivalence
\[
  \cD_Q( A ) \cong \frac{ \cE^{ \perp } }{ {}_{ Q,A }\!\Inj }.
\]  
The right hand side is the naive quotient category, which has the same objects as $\cE^{ \perp }$ and $\Hom$ spaces obtained by dividing by the subspaces of morphisms factoring through an object of ${}_{ Q,A }\!\Inj$.  Equivalently, the $\Hom$ spaces are obtained by dividing by the equivalence relation ``$\sim$''.  Hence condition (iv) can be replaced by
\begin{itemize}
\setlength\itemsep{4pt}

  \item[(iv')]  If an endomorphism $I \xrightarrow{ f } I$ in ${}_{ Q,A }\!\Mod$ induces an automorphism in $\frac{ \cE^{ \perp } }{ {}_{ Q,A }\!\Inj }$, then $f$ is already an automorphism.

\end{itemize}
See also Proposition \ref{pro:weq_property}(i).

(i) $\Rightarrow$ (ii):  Let $E \subseteq I$ with $E$ in $\cE$ be given.  Since ${}_{ Q,A }\!\Mod$ is a Grothendieck abelian category, there is a ${}_{ Q,A }\!\Inj$-envelope $E \xrightarrow{} J$, see \cite[prop.\ V.2.5 and cor.\ X.4.3]{Stenstroem}.  It is an $\cE^{ \perp }$-envelope by Lemma \ref{lem:9}(ii), so we can factorise as follows where the vertical arrow is the inclusion.
\[
\vcenter{
  \xymatrix @+0.5pc {
    E \ar[d] \ar[r] & J \ar^{ j }[dl] \\
    I \\
                    }
        }
\]
Since $E \xrightarrow{} J$ is an essential extension, $j$ is a monomorphism.  Identifying $J$ with its image under $j$, we have $E \subseteq J \subseteq I$.  But then $J = 0$ by (i) and $E = 0$ follows.

(ii) $\Rightarrow$ (vi):  Let $M \xrightarrow{} E_qI$ be a monomorphism whose adjoint morphism $F_qM \xrightarrow{} I$ is a monomorphism.  Since $F_qM$ is in $\cE$ by \cite[lem.\ 7.14 and thm.\ 7.1]{HJ-JLMS} we have $F_qM = 0$ by (ii).  But then $M \cong C_qF_qM = 0$ by \cite[lem.\ 7.28(a)]{HJ-JLMS}, where $C_q$ is the functor of \cite[prop.\ 7.15]{HJ-JLMS}.  

(vi) $\Rightarrow$ (v) is clear.

(v) $\Rightarrow$ (i):  Let $J \subseteq I$ with $J$ in ${}_{ Q,A }\!\Inj$ be given.  Combining \cite[proof of lem.\ 7.29]{HJ-JLMS} and \cite[lem.\ 3.4 and prop.\ 3.7]{HJ-TAMS} we can write $J$ up to isomorphism as $\coprod_{ p \in Q_0 } F_pD_p$ where each $D_p$ is in $\Inj( A )$.  There is a commutative diagram for each $q$ in $Q_0$,
\[
\vcenter{
  \xymatrix @+0.5pc {
    F_qD_q \ar^-{ \iota_q }[r] \ar_{ \varphi_q }[d] & \coprod_{ p \in Q_0 } F_pD_p  \lefteqn{,} \ar^{ j }[dl] \\
    I \\
                    }
        }
\]
where $\iota_q$ denotes the coproduct inclusion, $j$ the inclusion of $J$ into $I$.  Since $\iota_q$ and $j$ are monomorphisms, so is $F_qD_q \xrightarrow{ \varphi_q } I$.  It is the adjoint morphism of a morphism $D_q \xrightarrow{} E_qI$ which is a monomorphism by Lemma \ref{lem:14}.  But then $D_q = 0$ by (v).  This holds for each $q$ in $Q_0$, so $J = 0$.

(i) $\Rightarrow$ (iv'): This part of the proof is divided into three steps.

Step 1: Assume that an endomorphism $I \xrightarrow{ f } I$ in ${}_{ Q,A }\!\Mod$ induces the identity morphism in $\frac{ \cE^{ \perp } }{ {}_{ Q,A }\!\Inj }$.  We will prove that $f$ is a monomorphism.

The assumption means that there are morphisms $I \xrightarrow{ a } J \xrightarrow{ b } I$ with $J$ in ${}_{ Q,A }\!\Inj$ such that $\id_I - f = ba$.  Composing with the inclusion $\Ker f \xrightarrow{ k } I$ gives $( \id_I - f )k = bak$, that is, $k = bak$.  Since $k$ is a monomorphism, so is $bak$, and hence so is $ak$.  By \cite[proof of prop.\ 2.5]{Stenstroem} there is a commutative diagram
\[
\vcenter{
  \xymatrix @+0.5pc {
    \Ker f \ar^-{ \kappa }[r] \ar_{ ak }[d] & J' \ar^{ j' }[dl] \\
    J \\
                    }
        }
\]
where $\kappa$ is a ${}_{ Q,A }\!\Inj$-envelope, $j'$ the inclusion of a subobject, and this gives $bj'\kappa = bak = k$.  Since $k$ is a monomorphism, so is $bj'\kappa$, and hence so is $bj'$ since $\kappa$ is an essential extension.  So $bj'$ lets us view $J'$ as a subobject of $I$ whence $J' = 0$ by (i).  Hence $\Ker f = 0$ and $f$ is a monomorphism as claimed.

Step 2: Assume that an endomorphism $I \xrightarrow{ f } I$ in ${}_{ Q,A }\!\Mod$ induces an automorphism in $\frac{ \cE^{ \perp } }{ {}_{ Q,A }\!\Inj }$.  We will prove that $f$ is a monomorphism.

Pick $I \xrightarrow{ g } I$ such that $g$ induces an inverse of $f$ in $\frac{ \cE^{ \perp } }{ {}_{ Q,A }\!\Inj }$.  Then $gf$ induces the identity morphism in $\frac{ \cE^{ \perp } }{ {}_{ Q,A }\!\Inj }$ whence $gf$ is a monomorphism by Step 1.  Hence $f$ is a monomorphism.

Step 3: Assume that an endomorphism $I \xrightarrow{ f } I$ in ${}_{ Q,A }\!\Mod$ induces an automorphism in $\frac{ \cE^{ \perp } }{ {}_{ Q,A }\!\Inj }$.  We will prove that $f$ is an automorphism. 

By Step 2 we know that $f$ is a monomorphism, so there is a short exact sequence
\begin{equation}
\label{equ:thm:15_23_25:a}
  0 \xrightarrow{} I \xrightarrow{ f } I \xrightarrow{} J \xrightarrow{} 0,
\end{equation}  
which induces an exact sequence
\[
  \Ext_{ Q,A }^1( E,I )
  \xrightarrow{}
  \Ext_{ Q,A }^1( E,J )
  \xrightarrow{}
  \Ext_{ Q,A }^2( E,I )
\]
for each $E$.  If $E$ is in $\cE$, then the outer terms are zero.  This is true for the first term because $I$ is in $\cE^{ \perp }$.  For the third term, it is true because $I$ is in $\cE^{ \perp }$ while $( \cE,\cE^{ \perp } )$ is a hereditary cotorsion pair by \cite[thm.\ 4.4(b)]{HJ-JLMS}.  Hence the middle term is zero, so $J$ is in $\cE^{ \perp }$.  Thus, \eqref{equ:thm:15_23_25:a} is a short exact sequence with terms in $\cE^{ \perp }$, so induces a triangle in the triangulated category $\frac{ \cE^{ \perp } }{ {}_{ Q,A }\!\Inj }$ by \cite[lem.\ I.2.7]{Happel-book}.  Since $f$ induces an automorphism, $J$ must induce the zero object whence $J$ is in ${}_{ Q,A }\!\Inj$.  But then $J$ is projective-injective in the Frobenius category $\cE^{ \perp }$ so \eqref{equ:thm:15_23_25:a} is split exact.  Up to isomorphism, $J$ is hence a subobject of $I$ so $J$ is zero by (i).  So \eqref{equ:thm:15_23_25:a} proves that $f$ is an automorphism.

(iv') $\Rightarrow$ (iii): Let $I \xrightarrow{ i } X$ be a weak equivalence.  It follows from Proposition \ref{pro:weq_property}(iii) that there exists a morphism $X \xrightarrow{ \xi } I$ such that $\xi i$ induces the identity morphism in $\frac{ \cE^{ \perp } }{ {}_{ Q,A }\!\Inj }$.  Hence $\xi i$ is an automorphism by (iv').  If $\theta$ is the inverse then $\theta\xi i = \id_I$, which shows that $i$ is a split monomorphism.

(iii) $\Rightarrow$ (i): Let $J \subseteq I$ with $J$ in ${}_{ Q,A }\!\Inj$ be given.  There is an induced short exact sequence $0 \xrightarrow{} J \xrightarrow{} I \xrightarrow{ f } X \xrightarrow{} 0$, and $J$ is in $\cE$ by \cite[thm.\ 4.4(b)]{HJ-JLMS} so $f$ is a weak equivalence by Proposition \ref{pro:10}(ii).  But then $f$ is a split monomorphism by (iii) whence $J = 0$.  

(vi) $\Leftrightarrow$ (vii): By Remark \ref{rmk:adjunction}(i), a morphism $M \xrightarrow{ \mu } E_qI$ has the adjoint morphism $F_qM \xrightarrow{} I$ defined as the composition of the morphisms $F_qM \xrightarrow{ F_q\mu } F_qE_qI \xrightarrow{ \varepsilon_I } I$, where $\varepsilon$ is the counit of the adjoint pair $( F_q,E_q )$.  When $\mu$ is a monomorphism, so is $F_q\mu$ by \cite[cor.\ 3.9]{HJ-JLMS}, so the condition
\[
  \Image( F_qM \xrightarrow{ F_q\mu } F_qE_q I ) \cap Z_qI = 0
\]
is equivalent to the condition that the adjoint morphism is a monomorphism.  Hence (vi) and (vii) express the same condition.
\end{proof}

\begin{proof}
[Proof of Theorem \ref{thm:22}]
(ii) For each $q$ in $Q_0$, Lemma \ref{lem:20_21}(i) says there is a submodule $D_q \subseteq E_qI$ maximal with respect to the property that the inclusion $D_q \xrightarrow{ \delta_q } E_qI$ has an adjoint morphism $F_qD_q \xrightarrow{ \varphi_q } I$ which is a monomorphism.  The module $D_q$ is in $\Inj( A )$ by Lemma \ref{lem:20_21}(ii) because $E_qI$ is in $\Inj( A )$ by \cite[thm.\ E]{HJ-TAMS}.

We claim that the object
\[
  J' = \coprod_{ q \in Q_0 } F_qD_q
\]
is in ${}_{ Q,A }\!\Inj$.  To see so, observe that $J'$ can be written $\prod_{ q \in Q_0 } G_{ Sq }D_q$ by \cite[lem.\ 3.4 and prop.\ 3.7]{HJ-TAMS}, and that $G_{ Sq }D_q$ is in ${}_{ Q,A }\!\Inj$ by \cite[lem.\ 3.11]{HJ-JLMS} since $D_q$ is in $\Inj( A )$.  There is a unique morphism $\varphi'$ such that the following diagram is commutative for each $q$ in $Q_0$,
\[
\vcenter{
  \xymatrix @+0.5pc {
    F_qD_q \ar_{ \iota_q }[d] \ar^{ \varphi_q }[dr] \\
    J' \ar_-{ \varphi' }[r] & I \lefteqn{,} \\
                    }
        }
\]
where $\iota_q$ denotes the coproduct inclusion.  Combining with Remark \ref{rmk:adjunction}(i) provides the following commutative diagram.
\[
\vcenter{
  \xymatrix @+0.5pc {
    F_qD_q \ar@/^2.5pc/[rr]^{ \varphi_q } \ar_{ \iota_q }[d] \ar^{ F_q\delta_q }[r] & F_qE_qI \ar^-{ \varepsilon_I }[r] & I \ar@{=}[d] \\
    J' \ar_-{ \varphi' }[rr] && I \\
                    }
        }
\]
The morphism $\varphi'$ is a monomorphism by Lemma \ref{lem:18}.  Since $J'$ is in ${}_{ Q,A }\!\Inj$, the morphism $\varphi'$ is a split monomorphism which can be viewed as the inclusion of a direct summand.  The diagram shows that
\begin{equation}
\label{equ:thm:22:a}
  \begin{array}{l}
    \mbox{ the image of the monomorphism $\varphi_q$ is contained in } \\
    \mbox{ the direct summand $J'$ for each $q$ in $Q_0$. }
  \end{array}
\end{equation}

Consider the complement $I'$ of $J'$ in $I$.  Then $I \cong I' \oplus J'$, so it is clear that $I'$ is semiinjective.  To complete the proof, we will prove that $I'$ is minimal semiinjective by proving that it satisfies the condition in Theorem \ref{thm:15_23_25}(v).  So let $q$ in $Q_0$ and $D$ in $\Inj( A )$ be given and assume that a monomorphism $D \xrightarrow{ \delta' } E_qI'$ has an adjoint morphism $F_qD \xrightarrow{ \psi' } I'$ which is a monomorphism.  The inclusion $I' \xrightarrow{ i' } I$ is a split monomorphism, hence so is $E_qI' \xrightarrow{ E_q i' } E_qI$.  The composition $\delta$ of the morphisms $D \xrightarrow{ \delta' } E_qI' \xrightarrow{ E_q i' } E_qI$ is a monomorphism, and there is a commutative diagram
\[
\vcenter{
  \xymatrix @+0.5pc {
    F_qD \ar@/^2.5pc/[rr]^{ \psi' } \ar^-{ F_q\delta' }[r] \ar@{=}[d] & F_qE_qI' \ar^-{ \varepsilon_{ I' } }[r] \ar^{ F_qE_q i' }[d] & I' \ar^{ i' }[d] \\
    F_qD \ar@/^-2.5pc/[rr]_{ \psi } \ar_-{ F_q\delta }[r] & F_qE_qI \ar_-{ \varepsilon_I }[r] & I \\
                    }
        }
\]
where $\psi'$ and $\psi$ are the adjoint morphisms of $\delta'$ and $\delta$, see Remark \ref{rmk:adjunction}(i).  Since $\psi'$ and $i'$ are monomorphisms, so is $\psi = i'\psi'$.  The diagram shows that 
\begin{equation}
\label{equ:thm:22:b}
  \begin{array}{l}
    \mbox{ the image of the monomorphism $\psi$ is contained in } \\
    \mbox{ the direct summand $I'$, which is the complement of $J'$ in $I$. }
  \end{array}
\end{equation}

Now consider the morphism $D_q \oplus D \xrightarrow{ ( \delta_q,\delta ) } E_qI$.  Its adjoint morphism is the composition of the morphisms
\[
\vcenter{
  \xymatrix @+0.5pc {
    F_qD_q \oplus F_qD \ar^-{ ( F_q\delta_q,F_q\delta ) }[rr] && F_qE_qI \ar^-{ \varepsilon_I }[rr] && I, \\
                    }
        }
\]
so its adjoint morphism is $( \varphi_q,\psi )$ which is a monomorphism by Equations \eqref{equ:thm:22:a} and \eqref{equ:thm:22:b}.  Hence $( \delta_q,\delta )$ is a monomorphism by Lemma \ref{lem:14}.  However, by the maximality of $D_q$, this implies $\delta = 0$, and since $\delta$ is a monomorphism, it follows that $D = 0$.  Hence we have proved that $I'$ satisfies the condition in Theorem \ref{thm:15_23_25}(v).

(i) By \cite[thm.\ 5.9]{HJ-JLMS} there is a complete cotorsion pair $( \cE,\cE^{ \perp } )$ in the sense of \cite[def.\ 2.2.1 and lem.\ 2.2.6]{Goebel-Trlifaj-book}.  Hence there is a short exact sequence $0 \xrightarrow{} X \xrightarrow{ x } I \xrightarrow{} E \xrightarrow{} 0$ with $I$ in $\cE^{ \perp }$ and $E$ in $\cE$.  By part (ii) of the theorem we have $I = I' \oplus J'$ with $I'$ a minimal semiinjective object and $J'$ in ${}_{ Q,A }\!\Inj$.  Hence there is a short exact sequence $0 \xrightarrow{} J' \xrightarrow{} I \xrightarrow{ i } I' \xrightarrow{} 0$.  Note that $J'$ is in $\cE$ by \cite[thm.\ 4.4(b)]{HJ-JLMS}.  The morphisms $X \xrightarrow{ x } I$ and $I \xrightarrow{ i } I'$ are weak equivalences by Proposition \ref{pro:10}, so the composition $X \xrightarrow{ ix } I'$ is a weak equivalence by \cite[prop.\ 5.12]{Hovey_MathZ}, hence a minimal semiinjective resolution.
\end{proof}

\begin{proof}
[Proof of Theorem \ref{thm:27}]
Let $I \xrightarrow{ i } I'$ be a weak equivalence between minimal semiinjective objects.  Theorem \ref{thm:15_23_25}(iii) says that $i$ is a split monomorphism, so there exists a split epimorphism $I' \xrightarrow{ i' } I$ such that $i'i = \id_I$.  But then \cite[prop.\ 5.12]{Hovey_MathZ} implies that $i'$ is a weak equivalence, so Theorem \ref{thm:15_23_25}(iii) implies that $i'$ is a split monomorphism.  In particular, $i'$ is an epimorphism and a monomorphism, hence an isomorphism. 
\end{proof}

\begin{proof}
[Proof of Theorem \ref{thm:36}]
By Proposition \ref{pro:weq_property}(iii) the morphism $X \xrightarrow{ x } I$ induces a bijection
\[
\vcenter{
  \xymatrix @+0.5pc {
    \Hom_{ Q,A }( I,I' )/\sim \ar[r] & \Hom_{ Q,A }( X,I' )/\sim \lefteqn{.} \\
                    }
        }
\]
This implies the two first bullet points of part (ii).  To prove the third bullet point, observe that the relation $ix \sim x'$ in ${}_{ Q,A }\!\Mod$ induces an equality in $\cD_Q( A )$.  This implies that $i$ induces an isomorphism in $\cD_Q( A )$ because the weak equivalences $x$ and $x'$ induce isomorphisms in $\cD_Q( A )$.  But then $i$ is a weak equivalence in ${}_{ Q,A }\!\Mod$ by \cite[thm.\ 1.2.10(iv)]{Hovey_Book}, and then $i$ is an isomorphism in ${}_{ Q,A }\!\Mod$ by Theorem \ref{thm:27}.
\end{proof}

\section{Differential modules}
\label{sec:differential}

This section proves Theorem \ref{thm:RZ}, which was stated in the introduction.  Our theory can be specialised to the theory of differential modules by setting
\[
  \Bk = \BZ
\]
and setting $Q$ equal to the $\Bk$-preadditive category given by
\[
  \xymatrix{
    q \ar@(ul,ur)^{ \partial }
           }
\]
with $\partial^2 = 0$, and we will do so in this section.  Then a $Q$-shaped diagram is a differential module as introduced on page \pageref{page:Diff}, so ${}_{ Q,A }\!\Mod$ is equal to $\Diff( A )$, the category of differential modules over $A$.  Note that this $Q$ does satisfy the assumptions of the introduction with pseudoradical given by $\fr_q = \Bk \cdot \partial$.  We now explain how some concepts from the theory of ${}_{ Q,A }\!\Mod$ specialise to $\Diff( A )$; see also \cite[A.2]{HJ-TAMS}.

\begin{altbfhpg}
[\bf Cohomology functors]
\label{altbfhpg:Homology_functors}
specialise as
\[
  \BH_{ [q] }^i \rightsquigarrow \H
\]
for each $i \geqslant 1$, where $\H$ is the homology functor on differential modules introduced on page \pageref{page:Diff}.  This can be proved by computing $\BH_{ [q] }^i( - )$, which by \cite[def.\ 7.11]{HJ-JLMS} is $\Ext_Q^i( S\langle q \rangle,- )$, using the projective resolution $\cdots \xrightarrow{} Q( q,- ) \xrightarrow{} Q( q,- )$ of $S\langle q \rangle$.
\end{altbfhpg}

\begin{altbfhpg}
[\bf Weak equivalences]
\label{altbfhpg:weq}
specialise as
\[
  \weq \rightsquigarrow \{\, \mu \,|\, \mbox{$\mu$ is a quasi-isomorphism} \,\},
\]
where a morphism $\mu$ of differential modules is a {\em quasi-isomorphism} if $\H( \mu )$ is an isomorphism.  This follows from \ref{altbfhpg:Homology_functors} and \cite[thm.\ 7.2]{HJ-JLMS}.
\end{altbfhpg}

\begin{altbfhpg}
[\bf The class $\cE$ of exact objects]
\label{altbfhpg:The_class_E}
specialises as
\[
  \cE \rightsquigarrow \{\, ( M,\partial ) \in \Diff( A ) \,|\,  \mbox{$( M,\partial )$ is exact} \,\},
\]
where a differential module $( M,\partial )$ is {\em exact} if $\H( M,\partial ) = 0$.  This follows from \ref{altbfhpg:Homology_functors} and \cite[thm.\ 7.1]{HJ-JLMS}.
\end{altbfhpg}

\begin{altbfhpg}
[\bf The class $\cE^{ \perp }$ of semiinjective objects]
\label{altbfhpg:The_class_Eperp}
will only be specialised when the left global dimension of $A$ is finite.  Then
\[
  \cE^{ \perp } \rightsquigarrow \{\, ( J,\partial ) \in \Diff( A ) \,|\, \mbox{$J$ is in $\Inj( A )$} \,\}
\]
by \cite[thm.\ E]{HJ-TAMS}, and the right hand class can be written as
\[
  \{\, ( J,\partial ) \in \Diff( A ) \,|\, J \mbox{ is Gorenstein injective in } \Mod( A ) \,\}
\]
by the dual of \cite[prop.\ 2.27]{Holm-GHD}.  By \cite[thm.\ 1.1]{Wei} the last class can be written as
\[
  \{\, ( J,\partial ) \in \Diff( A ) \,|\, \mbox{$( J,\partial )$ is Gorenstein injective in $\Diff( A )$} \,\}.
\]
\end{altbfhpg}

\begin{altbfhpg}
[\bf The class of minimal semiinjective objects]
\label{altbfhpg:Minimal_semiinjective}
will only be specialised when the left global dimension of $A$ is finite.  
Then
\[
  \Bigg\{ I
  \:\Bigg|\! 
  \begin{array}{l}
    \mbox{$I$ is minimal} \\[1mm]
    \mbox{semiinjective}
  \end{array}
  \Bigg\}
  \rightsquigarrow
  \Bigg\{ ( J,\partial ) \in \Diff( A )
  \:\Bigg|\! 
  \begin{array}{l}
    \mbox{$( J,\partial )$ is Gorenstein injective without} \\[1mm]
    \mbox{non-zero injective summands in $\Diff( A )$}
  \end{array}
  \Bigg\}
\]
by \ref{altbfhpg:The_class_Eperp} and Definition \ref{def:A}(ii).  Here, the right hand class modulo isomorphism is the left hand class in Theorem \ref{thm:RZ}.
\end{altbfhpg}

\begin{altbfhpg}
[\bf Minimal semiinjective resolutions]
\label{altbfhpg:Minimal_semiinjective_resolution}
will only be specialised when the left global dimension of $A$ is finite.  Then they become quasi-isomorphisms $( M,\partial_M ) \xrightarrow{} ( J,\partial_J )$ where $( J,\partial_J )$ is Gorenstein injective without non-zero injective summands in $\Diff( A )$.  This follows from \ref{altbfhpg:weq}, \ref{altbfhpg:Minimal_semiinjective}, and Definition \ref{def:A}(iv).
\end{altbfhpg}

Recall from page \pageref{page:Diff} the functors $\B$, $\Z$, $\H$ from $\Diff( A )$ to $\Mod( A )$ which send a differential module to its boundaries, cycles, and homology.  There is a short exact sequence in $\Mod( A )$,
\begin{equation}
\label{equ:BZH}
  0 
  \xrightarrow{} \B( M,\partial )
  \xrightarrow{} \Z( M,\partial )
  \xrightarrow{ \zeta } \H( M,\partial )
  \xrightarrow{} 0,
\end{equation}
natural with respect to $( M,\partial )$ in $\Diff( A )$.

The modules $\B( M,\partial )$, $\Z( M,\partial )$, $\H( M,\partial )$ can be viewed as differential modules $\big( \B( M,\partial ),0 \big)$, $\big( \Z( M,\partial ),0 \big)$, $\big( \H( M,\partial ),0 \big)$ with zero differential.  There is a canonical short exact sequence in $\Diff( A )$,
\begin{equation}
\label{equ:j}
  0
  \xrightarrow{} \big( \Z( M,\partial ),0 \big)
  \xrightarrow{ j } ( M,\partial )
  \xrightarrow{} \big( \B( M,\partial ),0 \big)
  \xrightarrow{} 0,
\end{equation}
natural with respect to $( M,\partial )$ in $\Diff( A )$, and $\H( j )$ can be identified with the morphism $\zeta$ in the sequence \eqref{equ:BZH}.

\begin{Lemma}
\label{lem:MA9}
For $( M,\partial_M )$ in $\Diff( A )$, consider the differential module $\big( \H( M,\partial_M ),0 \big)$ with zero differential.

Assume that the sequence \eqref{equ:BZH} in $\Mod( A )$ is split exact.  Then there exists a monic quasi-isomorphism $\big( \H( M,\partial_M ),0 \big) \xrightarrow{ \eta } ( M,\partial_M )$ in $\Diff( A )$.
\end{Lemma}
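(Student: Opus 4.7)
The plan is to build $\eta$ by splitting \eqref{equ:BZH} and composing the resulting section with the canonical map $j$ of \eqref{equ:j}.

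Since the short exact sequence
\[
  0 \to \B(M,\partial_M) \to \Z(M,\partial_M) \xrightarrow{\zeta} \H(M,\partial_M) \to 0
\]
in $\Mod(A)$ is split exact by hypothesis, I would fix a section $\sigma \colon \H(M,\partial_M) \to \Z(M,\partial_M)$ with $\zeta \sigma = \id$. Viewed as a morphism of differential modules carrying the zero differential on both sides, $\sigma$ is automatically a chain map, giving a morphism $\bigl(\H(M,\partial_M),0\bigr) \to \bigl(\Z(M,\partial_M),0\bigr)$ in $\Diff(A)$. I would then define
\[
  \eta \;=\; j \circ \sigma \;\colon\; \bigl(\H(M,\partial_M),0\bigr) \longrightarrow (M,\partial_M),
\]
where $j$ is the canonical inclusion from \eqref{equ:j}.

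The two required properties are then essentially immediate. For injectivity, $\sigma$ is monic because it is a section, and $j$ is monic by construction, so $\eta = j \sigma$ is monic. For the quasi-isomorphism property, applying the functor $\H$ and using its naturality together with the identification of $\H(j)$ with $\zeta$ from the paragraph containing \eqref{equ:j}, and noting that $\H$ restricted to differential modules with zero differential is just the identity on the underlying module, I get
\[
  \H(\eta) \;=\; \H(j) \circ \H(\sigma) \;=\; \zeta \circ \sigma \;=\; \id_{\H(M,\partial_M)},
\]
which is in particular an isomorphism. By \ref{altbfhpg:weq}, $\eta$ is therefore a weak equivalence in $\Diff(A)$, completing the proof.

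There is no real obstacle here; the only thing one must be careful about is verifying that $\sigma$, originally a morphism in $\Mod(A)$, genuinely lifts to a morphism in $\Diff(A)$ between the claimed differential modules, which holds trivially because both source and target are equipped with the zero differential.
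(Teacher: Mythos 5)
Your proof is correct, and it takes a genuinely more direct route than the paper's. The paper works with the \emph{retraction} $p \colon \Z( M,\partial_M ) \to \B( M,\partial_M )$ coming from the splitting of \eqref{equ:BZH}, forms the pushout of \eqref{equ:j} along $p$, and applies the Snake Lemma to realise $\bigl( \H( M,\partial_M ),0 \bigr)$ as $\Ker m$ inside $( M,\partial_M )$; it then shows the cokernel $( V,\partial_V )$ is exact via the long exact homology sequence of the pushout's associated short exact sequence, and invokes Proposition \ref{pro:10}(i) to conclude $\eta$ is a quasi-isomorphism. You instead use the \emph{section} $\sigma$ of $\zeta$ and simply set $\eta = j\sigma$, verifying $\H( \eta ) = \zeta\sigma = \id$ directly from the identification $\H( j ) = \zeta$ stated just before the lemma; this is shorter and avoids the pushout and Snake Lemma entirely. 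The two constructions produce the same subobject of $( M,\partial_M )$, since $\Image \sigma = \Ker p$, so nothing is lost: the only extra information the paper's argument yields is the explicit short exact sequence $0 \to \bigl( \H( M,\partial_M ),0 \bigr) \to ( M,\partial_M ) \to ( V,\partial_V ) \to 0$ with exact cokernel, which is not used elsewhere. Your one point of care --- that $j\sigma$ really is a morphism in $\Diff( A )$ --- is correctly disposed of: its image lands in $\Z( M,\partial_M ) = \Ker \partial_M$, so it intertwines the zero differential with $\partial_M$.
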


\begin{proof}
Since the sequence \eqref{equ:BZH} is split exact, there is a splitting morphism $p$ giving the following diagram.
\begin{equation}
\label{equ:MA9:a}
\vcenter{
  \xymatrix @+0.5pc {
    0 \ar[r] & \B( M,\partial_M ) \ar[r] & \Z( M,\partial_M ) \ar[r]^{ \zeta } \ar@/^1.35pc/[l]^{ p } & \H( M,\partial_M ) \ar[r] & 0. \\
                    }
        }
\end{equation}
We can also view $p$ as a morphism $\big( \Z( M,\partial_M ),0 \big) \xrightarrow{ p } \big( \B( M,\partial_M ),0 \big)$ and use it to construct the following diagram in $\Diff( A )$,
\begin{equation}
\label{equ:MA9:d}
\vcenter{
  \xymatrix @+0.5pc {
    0 \ar[r] & \big( \Z( M,\partial_M ),0 \big) \ar[r]^-{ j } \ar[d]_{ p } & ( M,\partial_M ) \ar[r] \ar[d]^{ m } & \big( \B( M,\partial_M ),0 \big) \ar[r] \ar@{=}[d] & 0 \\
    0 \ar[r] & \big( \B( M,\partial_M ),0 \big) \ar[r] & ( V,\partial_V ) \ar[r] & \big( \B( M,\partial_M ),0 \big) \ar[r] & 0 \lefteqn{,} & {}\save[]-<9.15cm,-0.925cm>*\txt<8pc>{pushout} \restore \\
                    }
        }
\end{equation}
where the first row is the short exact sequence \eqref{equ:j}, the first square is a pushout square, and the second row is short exact; see \cite[prop.\ VIII.4.2]{MacLane-book}.  The Snake Lemma implies $\Ker p \cong \Ker m$ and $\Coker p \cong \Coker m$.  Since $p$ is the splitting morphism from diagram \eqref{equ:MA9:a}, we have $\Ker p \cong \big( \H( M,\partial_M ),0 \big)$ and $\Coker p = 0$.  Combining this information provides a short exact sequence in $\Diff( A )$,
\[
  0
  \xrightarrow{} \big( \H( M,\partial_M ),0 \big)
  \xrightarrow{ \eta } ( M,\partial_M )
  \xrightarrow{ m } ( V,\partial_V )
  \xrightarrow{} 0.
\]
To prove that $\eta$ is a quasi-isomorphism, it is enough to prove $\H( V,\partial_V ) = 0$ by Proposition \ref{pro:10}(i), \ref{altbfhpg:weq}, and \ref{altbfhpg:The_class_E}.

The pushout square in \eqref{equ:MA9:d} induces a short exact sequence
\begin{equation}
\label{equ:MA9:b}
  0
  \xrightarrow{} \big( \Z( M,\partial_M ),0 \big)
  \xrightarrow{ \tiny \begin{pmatrix} j \\ -p \end{pmatrix} } ( M,\partial_M ) \oplus \big( \B( M,\partial_M ),0 \big)
  \xrightarrow{} ( V,\partial_V )
  \xrightarrow{} 0
\end{equation}
in $\Diff( A )$ by \cite[prop.\ 2.53]{Freyd-book}.  Note that $\begin{pmatrix} j \\ -p \end{pmatrix}$ is indeed a monomorphism since $j$ is a monomorphism.  As remarked before the proposition, $\H( j )$ can be identified with $\zeta$ from diagrams \eqref{equ:BZH} and \eqref{equ:MA9:a}, and $\H( -p )$ can clearly be identified with $\Z( M,\partial_M ) \xrightarrow{ -p } \B( M,\partial_M )$, so $\H \begin{pmatrix} j \\ -p \end{pmatrix} = \begin{pmatrix} \H( j ) \\ \H( -p ) \end{pmatrix}$ can be identified with $\begin{pmatrix} \zeta \\ -p \end{pmatrix}$.  This is an isomorphism since $p$ is a splitting morphism, see diagram \eqref{equ:MA9:a}.  Hence the long exact homology sequence induced by \eqref{equ:MA9:b} implies $\H( V,\partial_V ) = 0$ as desired.
\end{proof}

\begin{proof}
[Proof of Theorem \ref{thm:RZ}]
Since $A$ is left hereditary, it has finite left global dimension, so paragraphs \ref{altbfhpg:Homology_functors} through \ref{altbfhpg:Minimal_semiinjective_resolution} apply.

It is clear that the homology functor $\Diff( A ) \xrightarrow{ \H } \Mod( A )$ induces a map $\H$ as shown in Equation \eqref{equ:RZ:a}.  We will prove that an inverse map, $\operatorname{K}$, is given by mapping the isomorphism class of $M$ in $\Mod( A )$ to the isomorphism class of $( J,\partial_J )$ in $\Diff( A )$ where $( M,0 ) \xrightarrow{ \mu } ( J,\partial_J )$ is a quasi-isomorphism and $( J,\partial_J )$ is Gorenstein injective without non-zero injective summands in $\Diff( A )$.  

Such a $\mu$ is a minimal semiinjective resolution by \ref{altbfhpg:Minimal_semiinjective_resolution}, so it exists by Theorem \ref{thm:22}(i) and $( J,\partial_J )$ is determined up to isomorphism by Theorem \ref{thm:36}.

The map $\operatorname{K}$ takes values in the left hand set of Equation \eqref{equ:RZ:a} by construction.

$\H\!\operatorname{K} = \id$: The quasi-isomorphism $( M,0 ) \xrightarrow{ \mu } ( J,\partial_J )$ provides the second equality in the following computation up to isomorphism.
\[
  \H\!\operatorname{K}( M ) = \H( J,\partial_J ) = \H( M,0 ) = M
\]

$\operatorname{K}\!\H = \id$:  Let $( J,\partial_J )$ be Gorenstein injective without non-zero injective summands in $\Diff( A )$.  By \ref{altbfhpg:The_class_Eperp} we have $J$ in $\Inj( A )$ whence the quotient $\B( J,\partial_J )$ of $J$ is also in $\Inj( A )$ because $A$ is left hereditary.  This implies that the sequence \eqref{equ:BZH} is split exact, so Lemma \ref{lem:MA9} gives a quasi-isomorphism $\big( \H( J,\partial_J ),0 \big) \xrightarrow{ \eta } ( J,\partial_J )$, and by the definition of $\operatorname{K}$ this shows
\[
  \operatorname{K}\!\H( J,\partial_J ) = ( J,\partial_J ).
\qedhere
\]
\end{proof}

\appendix

%
\numberwithin{equation}{section}
\renewcommand{\theequation}{\Alph{section}.\arabic{equation}}

\renewcommand{\thesubsection}{\Alph{section}.\Roman{subsection}}

\renewcommand{\thesection}{\greek{section}}
\section{Minimal semiinjective resolutions in the classic derived category}
\label{app:complexes}

Our theory can be specialised to the theory of minimal semiinjective resolutions in $\cD( A )$ by setting $Q$ equal to the $\Bk$-preadditive category given by Figure \ref{fig:linear_quiver} modulo the relations that any two arrows compose to zero, and we will do so in this appendix.  Then a $Q$-shaped diagram is a chain complex, so ${}_{ Q,A }\!\Mod$ is equal to $\Ch( A )$, the category of chain complexes and chain maps over $A$, and $\cD_Q( A )$ is equal to $\cD( A )$.  Theorems \ref{thm:22}, \ref{thm:27}, \ref{thm:36}, and \ref{thm:15_23_25} specialise to the following results due to \cite{AFH}, \cite[app.\ B]{CFH-book}, \cite[sec.\ 10]{HBF-preprints-19}, \cite[sec.\ 2.3 and 2.4]{Garcia-Rozas-book}, \cite[app.\ B]{Krause}.

{\bf Theorem \ref{thm:22} for complexes.}
{\em 
\begin{enumerate}
\setlength\itemsep{4pt}

  \item  Each $X$ in $\Ch( A )$ has a minimal semiinjective resolution.

  \item  Each semiinjective complex $I$ in $\Ch( A )$ has the form $I = I' \oplus J'$ in $\Ch( A )$ with $I'$ a minimal semiinjective complex and $J'$ a null homotopic complex of injective modules.  

\end{enumerate}
}

{\bf Theorem \ref{thm:27} for complexes.}
{\em 
If $I \xrightarrow{ i } I'$ in $\Ch( A )$ is a quasi-isomorphism between minimal semiinjective complexes, then $i$ is an isomorphism in $\Ch( A )$.
}

{\bf Theorem \ref{thm:36} for complexes.}
{\em 
If $X \xrightarrow{ x } I$ and $X \xrightarrow{ x' } I'$ are minimal semiinjective resolutions in $\Ch( A )$, then:
\begin{itemize}
\setlength\itemsep{4pt}

  \item  The diagram
\[
\vcenter{
  \xymatrix @+0.5pc {
    X \ar^{ x }[r] \ar_{ x' }[d] & I \ar@{.>}^{ i }[dl] \\
    I' \\
                    }
        }
\]
can be completed with a chain map $i$ such that $ix$ is chain homotopic to $x'$ in $\Ch( A )$.

  \item  The chain map $i$ is unique up to chain homotopy.

  \item  Each completing chain map $i$ is an isomorphism in $\Ch( A )$.

\end{itemize}
}

{\bf Theorem \ref{thm:15_23_25} for complexes.}
{\em 
Let
\[
  I = 
  \cdots \xrightarrow{}
  I_2 \xrightarrow{ \partial_2 }
  I_1 \xrightarrow{ \partial_1 }
  I_0 \xrightarrow{ \partial_0 }
  I_{-1} \xrightarrow{ \partial_{-1} }
  I_{-2} \xrightarrow{}
  \cdots
\]
be a semiinjective complex in $\Ch( A )$.  The following conditions are equivalent.
\begin{enumerate}
\setlength\itemsep{4pt}

  \item  $I$ is minimal in the sense that if $J \subseteq I$ with $J$ a null homotopic complex of injective modules, then $J = 0$.
  
  \item  If $E \subseteq I$ with $E$ an exact complex, then $E = 0$.

  \item  Each quasi-isomorphism $I \xrightarrow{} X$ in $\Ch( A )$ is a split monomorphism.

  \item  If an endomorphism $I \xrightarrow{ f } I$ in $\Ch( A )$ induces an automorphism in $\cD( A )$, then $f$ is already an automorphism.

  \item  $\Ker \partial_q$ is an essential submodule of $I_q$ for each $q$.

\end{enumerate}
}

The specialisations are obtained by applying Figure \ref{fig:specialisation}, which explains how some concepts from ${}_{ Q,A }\!\Mod$ specialise to $\Ch( A )$.  Note that items (i)-(iv) in Theorem \ref{thm:15_23_25} specialise to items (i)-(iv) in Theorem \ref{thm:15_23_25} for complexes, while items (v)-(vii) in Theorem \ref{thm:15_23_25}  all specialise to item (v) in Theorem \ref{thm:15_23_25} for complexes.  
\begin{figure}
\begin{tabular}{c|c|c}
      & ${}_{ Q,A }\!\Mod$
      & $\Ch( A )$ \\[2mm] \cline{1-3}
  (a) & $\cD_Q( A )$
      & $\cD( A )$ \vphantom{$x^{x^{x^{x^{x^x}}}}$} \\[1.5mm]
  (b) & $\cE$
      & exact complexes \\[1.5mm]
  (c) & $\weq$
      & quasi-isomorphisms \\[1.5mm]
  (d) & semiinjective object
      & semiinjective complex \\[1.5mm]
  (e) & semiinjective resolution
      & semiinjective resolution \\[1.5mm]
  (f) & minimal semiinjective resolution
      & minimal semiinjective resolution \\[1.5mm]
  (g) & ${}_{ Q,A }\!\Inj$
      & null homotopic complexes of injective modules \\[1.5mm]
  (h) & $\sim$ in the category $\cE^{ \perp }$
      & chain homotopy of chain maps \\[1.5mm]
\end{tabular}
\caption{Set $Q$ equal to the $\Bk$-preadditive category given by Figure \ref{fig:linear_quiver} modulo the relations that any two arrows compose to zero.  Then ${}_{ Q,A }\!\Mod$ is equal to $\Ch( A )$.  This table explains how some concepts specialise.  Items (a)-(e) are given by \cite[2.4, 3.2, 4.2]{HJ-Abel}, item (g) is \cite[exer.\ 14.8]{Kashiwara-Schapira}, and item (h) follows from item (g).  Item (f) holds because Theorem \ref{thm:15_23_25}(i), characterising minimal semiinjective objects in ${}_{ Q,A }\!\Mod$, specialises to \cite[prop.\ 2.3.14(b)]{Garcia-Rozas-book}, characterising minimal semiinjective complexes.}
\label{fig:specialisation}
\end{figure}
For instance, consider Theorem \ref{thm:15_23_25}(vi).  The functors $F_q$ and $E_q$ from Remark \ref{rmk:EFG} specialise to
\[
\xymatrix
{
  \Ch( A )
    \ar[rr]_{ E_q } &&
  \Mod( A ),
    \ar@/_1.5pc/[ll]_{ F_q }
}
\]
given on objects by
\[
  F_qM = \cdots \xrightarrow{} 0 \xrightarrow{} M \xrightarrow{ \id } M \xrightarrow{} 0 \xrightarrow{} \cdots
  \;\;\;,\;\;\;
  E_qI = I_q.	
\]
In $F_qM$, the module $M$ is placed in homological degrees $q$ and $q-1$.  A morphism $M \xrightarrow{ \mu } E_qI$ has the adjoint morphism $F_qM \xrightarrow{ \varphi } I$ given by the following chain map.
\[
\vcenter{
  \xymatrix @+0.5pc {
    F_qM \ar@<3ex>_{ \varphi }[d] \!\!\!\!\!\!\!\!\!\!\!\!\!\!\!\! & = \; \cdots \ar[r] & 0 \ar[r] \ar[d] & M \ar^{ \id }[r] \ar_{ \mu }[d] & M \ar[r] \ar^{ \partial_q\mu }[d] & 0 \ar[r] \ar[d] & \cdots \\
    I \!\!\!\!\!\!\!\!\!\!\!\!\!\!\!\! & = \; \cdots \ar[r] & I_{ q+1 } \ar[r] & I_q \ar_{ \partial_q }[r] & I_{ q-1 } \ar[r] & I_{ q-2 } \ar[r] & \cdots \\
                    }
        }
\]
Hence Theorem \ref{thm:15_23_25}(vi) specialises to the statement that given a monomorphism $M \xrightarrow{ \mu } I_q$ for which $M \xrightarrow{ \partial_q\mu } I_{ q-1 }$ is also a monomorphism, we must have $\mu = 0$.  That is, given a monomorphism $M \xrightarrow{ \mu } I_q$ for which $\Image \mu \cap \Ker \partial_q = 0$, we must have $\mu = 0$.  This is equivalent to item (v) in Theorem \ref{thm:15_23_25} for complexes.

\medskip
\noindent
{\bf Acknowledgement.}
This work was supported by a DNRF Chair from the Danish National Research Foundation (grant DNRF156), by a Research Project 2 from the Independent Research Fund Denmark (grant 1026-00050B), and by Aarhus University Research Foundation (grant AUFF-F-2020-7-16).


\begin{thebibliography}{39}


%
%

\bibitem{AFH}  L.\ L.\ Avramov, H.-B.\ Foxby, and S.\ Halperin, Differential Graded homological algebra, unpublished notes.

%

\bibitem{Boekstedt-Neeman}  M.\ B\"{o}kstedt and A.\ Neeman, {\it Homotopy limits in triangulated categories,} Compositio Math.\ {\bf 86} (1993), 209--234. 



\bibitem{Cartan-Eilenberg-Book}  H.\ Cartan and S.\ Eilenberg, ``Homological algebra'', Princeton University Press, Princeton, 1956.

\bibitem{Chen-Iyengar}  X.-W.\ Chen and S.\ B.\ Iyengar, {\it Support and injective resolutions of complexes over commutative rings}, Homology Homotopy Appl.\ {\bf 12} (2010), 39--44.

\bibitem{CFH-book}  L.\ W.\ Christensen, H.-B.\ Foxby, and H.\ Holm, ``Derived category methods in commutative algebra'', preprint (2023).

\bibitem{Christensen-Iyengar-Marley}  L.\ W.\ Christensen, S.\ B.\ Iyengar, and T.\ Marley, {\it Rigidity of Ext and Tor with coefficients in residue fields of a commutative Noetherian ring,} Proc.\ Edinb.\ Math.\ Soc.\ (2) {\bf 62} (2019), 305--321.

%
%
%
%
%

\bibitem{DSS}  I.\ Dell'Ambrogio, G.\ Stevenson, and J.\ \v{S}\v{t}ov\'{\i}\v{c}ek, {\it Gorenstein homological algebra and universal coefficient theorems,} Math.\ Z.\ {\bf 287} (2017), 1109--1155.


%




\bibitem{Enochs-Jenda-Xu}  E.\ E.\ Enochs, O.\ M.\ G.\ Jenda, and J.\ Xu, {\it Orthogonality in the category of complexes,} Math.\ J.\ Okayama Univ.\ {\bf 38} (1996), 25--46.


\bibitem{HBF-preprints-19}  H.-B.\ Foxby, A homological theory of complexes of modules, preprint (1981).

\bibitem{HBF-Bounded-complexes}  H.-B.\ Foxby, {\it 
Bounded complexes of flat modules,} J.\ Pure Appl.\ Algebra {\bf 15} (1979), 149--172.

\bibitem{Freyd-book}  P.\ Freyd, ``Abelian categories'', Harper's Series in Modern Mathematics, 
Harper \& Row, New York, 1964.

\bibitem{Garcia-Rozas-book}  J.\ R.\ Garc\'{\i}a Rozas, ``Covers and envelopes in the category of complexes of modules'', Chapman \& Hall/CRC Res.\ Notes Math., Vol. 407, Chapman \& Hall/CRC, Boca Raton, Florida, 1999.

\bibitem{Gillespie-BLMS}  J.\ Gillespie, {\it Hereditary abelian model categories,} Bull.\ London Math.\ Soc.\ {\bf 48} (2016), 895--922.


\bibitem{Goebel-Trlifaj-book}  R.\ G{\"o}bel and J.\ Trlifaj, ``Approximations and endomorphism algebras of modules'', Vol.\ 41, Walter de Gruyter, Berlin, 2006.




\bibitem{Happel-book}  D.\ Happel, ``Triangulated categories in the representation theory of finite dimensional algebras'', London Math.\ Soc.\ Lecture Note Ser., Vol.\ 119, Cambridge University Press, Cambridge, 1988.

\bibitem{Hilton-book}  P.\ Hilton, ``Homotopy theory and duality'', Gordon and Breach, New York--London--Paris, 1965.


\bibitem{Holm-GHD}  H.\ Holm, {\it Gorenstein homological dimensions,} J.\ Pure Appl.\ Algebra {\bf 189} (2004), 167--193.

\bibitem{HJ-Abel}  H.\ Holm and P.\ J\o rgensen, {\it A brief introduction to the $Q$-shaped derived category,} to appear in the Proceedings of the Abel Symposium 2022.  {\tt arXiv:2212.12524v1}


\bibitem{HJ-JLMS}  H.\ Holm and P.\ J\o rgensen, {\it The $Q$-shaped derived category of a ring,} J.\ London Math.\ Soc.\ (2) {\bf 106} (2022), 3263--3316.

\bibitem{HJ-TAMS}  H.\ Holm and P.\ J\o rgensen, {\it The $Q$-shaped derived category of a ring -- compact and perfect objects,} Trans.\ Amer.\ Math.\ Soc.\ {\bf 377} (2024), 3095--3128.


\bibitem{Hovey_MathZ}  M.\ Hovey, {\it Cotorsion pairs, model category structures, and representation theory,} Math.\ Z.\ {\bf 241} (2002), 553--592.

\bibitem{Hovey_Book}  M.\ Hovey, ``Model categories'', Math.\ Surveys Monogr., Vol.\ 63, American Mathematical Society, Providence, 1999.

\bibitem{Iacob-Iyengar}  A.\ Iacob and S.\ B.\ Iyengar, {\it Homological dimensions and regular rings,} J.\ Algebra {\bf 322} (2009), 3451--3458.


\bibitem{Iyama-Kato-Miyachi-N}  O.\ Iyama, K.\ Kato, and J. Miyachi, {\it Derived categories of $N$-complexes,} J.\ London Math.\ Soc.\ (2) {\bf 96} (2017), 687--716.

\bibitem{Iyama-Minamoto-1}  O.\ Iyama and H.\ Minamoto, $\cA$-derived categories, in preparation.

\bibitem{Iyama-Minamoto-2}  O.\ Iyama and H.\ Minamoto, {\it On a generalization of complexes and their derived categories,} extended abstract of a talk at the 47th Ring and Representation Theory Symposium, Osaka City University, 2014.

%
%


\bibitem{Kashiwara-Schapira}  M.\ Kashiwara and P.\ Schapira, ``Categories and sheaves'', Grundlehren Math.\ Wiss., Vol.\ 332, Springer, Berlin Heidelberg, 2006.

%


%
%

\bibitem{Krause}  H.\ Krause, {\it The stable derived category of a noetherian scheme,} Compos.\ Math.\ {\bf 141} (2005), 1128--1162.


\bibitem{MacLane-book}  S.\ Mac Lane, ``Categories for the working mathematician'', second edition, Grad.\ Texts in Math., Vol.\ 5, Springer, New York, 1998.




%
%

\bibitem{Ringel-Zhang} C.\ M.\ Ringel and P.\ Zhang, {\it Representations of quivers over the algebra of dual numbers,} J.\ Algebra {\bf 475} (2017), 327--360.

\bibitem{Roig}  A.\ Roig, {\it Minimal resolutions and other minimal models,} Publ.\ Mat.\ {\bf 37} (1993), 285--303.


\bibitem{Schiffler}  R.\ Schiffler, ``Quiver representations'', CMS Books Math., Springer, Cham, 2014.

\bibitem{Stenstroem}  B.\ Stenstr\"{o}m, ``Rings of quotients'', Grundlehren Math.\ Wiss., Vol.\ 217, Springer, Berlin--Heidelberg--New York, 1975.

\bibitem{Wei}  J. Wei, {\it Gorenstein homological theory for differential modules,} Proc.\ Roy.\ Soc.\ Edinburgh Sect.\ A {\bf 145} (2015), 639--655.


\end{thebibliography}
\end{document}